\documentclass[11pt,a4paper,reqno]{amsart}
\usepackage[english]{babel}
\usepackage[applemac]{inputenc}
\usepackage[T1]{fontenc}
\usepackage{palatino}
\usepackage{amsmath}
\usepackage{amssymb}
\usepackage{amsthm}
\usepackage{amsfonts}
\usepackage{graphicx}
\usepackage{color}
\usepackage{vmargin}
\usepackage{mathtools}
\usepackage{stix}

\usepackage{pgf,tikz}
\usepackage{subfig}
\usepackage{mathrsfs}
\usetikzlibrary{arrows}

\usepackage{pgfplots}
\pgfplotsset{compat=1.14}
\usepackage{pstricks-add}

\usetikzlibrary[patterns]

\setmarginsrb{20mm}{20mm}{20mm}{20mm}{10mm}{10mm}{10mm}{10mm}

\usepackage[colorlinks = true, citecolor = black]{hyperref}
\pagestyle{headings}

\newtheorem{theorem}{Theorem}[section]

\newtheorem{cor}[theorem]{Corollary}
\newtheorem{prop}[theorem]{Proposition}

\newtheorem{lem}[theorem]{Lemma}

\theoremstyle{definition}
\newtheorem{defn}{Definition}[section]

\newtheorem{remark}[theorem]{Remark}

\newtheorem*{theorem*}{Theorem}

\def\vint{\mathop{\mathchoice%
          {\setbox0\hbox{$\displaystyle\intop$}\kern 0.22\wd0%
           \vcenter{\hrule width 0.6\wd0}\kern -0.82\wd0}%
          {\setbox0\hbox{$\textstyle\intop$}\kern 0.2\wd0%
           \vcenter{\hrule width 0.6\wd0}\kern -0.8\wd0}%
          {\setbox0\hbox{$\scriptstyle\intop$}\kern 0.2\wd0%
           \vcenter{\hrule width 0.6\wd0}\kern -0.8\wd0}%
          {\setbox0\hbox{$\scriptscriptstyle\intop$}\kern 0.2\wd0%
           \vcenter{\hrule width 0.6\wd0}\kern -0.8\wd0}}%
          \mathopen{}\int}

\newcommand{\R}{\mathbb{R}}
\newcommand{\Rn}{{\mathbb R}^n}

\newcommand{\ep}{\varepsilon}
\newcommand{\Om}{\Omega}
\newcommand{\om}{\omega}
\newcommand{\osc}{{\rm{osc}}}
\newcommand{\ud}{\mathrm {d}}

\newcommand{\diam}{\operatorname{diam}}

\definecolor{blau}{rgb}{0.1,0.0,0.9}
\definecolor{violet}{rgb}{0.54, 0.17, 0.89}

\newcommand{\kom}[1]{}
\renewcommand{\kom}[1]{{\bf \blue /#1/}}

\newcounter{komcounter}
\numberwithin{komcounter}{section}

\def\XXint#1#2#3{{\setbox0=\hbox{$#1{#2#3}{\int}$}
		\vcenter{\hbox{$#2#3$}}\kern-.5\wd0}}

\makeatletter
\@namedef{subjclassname@2020}{\textup{2020} Mathematics Subject Classification}
\makeatother

\begin{document}

\title[Non-harmonic $\ep$-approximability]{$\ep$-Approximability and Quantitative Fatou Property on Lipschitz-graph domains for a class of non-harmonic functions}
\author[T.\ Adamowicz]{Tomasz Adamowicz{\small$^1$}}
\address{The Institute of Mathematics, Polish Academy of Sciences \\ ul. \'Sniadeckich 8, 00-656 Warsaw, Poland}
\email{tadamowi@impan.pl}
\author[M.\ J.\ Gonz\'alez ]{Mar\'ia\ J.\ Gonz\'alez{\small$^2$}}
\address{Departamento de Matem\'aticas, Universidad de C\'adiz, 11510 Puerto Real (C\'adiz), Spain}
\email{majose.gonzalez@uca.es}
\author[M. Grysz\'owka ]{Marcin Grysz\'owka{\small$^1$}}
\address{Institute of Mathematics, Polish Academy of Sciences, ul. \'Sniadeckich 8, 00-656 Warsaw, Poland\/ \and Faculty of Mathematics, Informatics and Mechanics, University of Warsaw, ul. Banacha 2, 02-097 Warsaw, Poland}
\email{mgryszowka@impan.pl}
\thanks{{\small$^1$} T.A. and M.G were supported by the National Science Center, Poland (NCN), UMO-2020/39/O/ST1/00058.
\newline \noindent
{\small$^2$} M.J.G. was  supported in part by the Spanish Ministerio de Ciencia e Innovaci\'on (grant no.  PID2021-123151NB-I00), and  by the grant "Operator Theory: an interdisciplinary approach", reference ProyExcel\_00780, a project financed in the 2021 call for Grants for Excellence Projects, under a competitive bidding regime, aimed at entities qualified as Agents of the Andalusian Knowledge System, in the scope of the Andalusian Research, Development and Innovation Plan (PAIDI 2020). Counseling of University, Research and Innovation of the Junta de Andaluc\'ia.
}
\keywords{Carleson measure, epsilon-approximability, Fatou theorems, Lipschitz domain, non-tangential maximal function, square function}
\date{\today}
\subjclass[2020]{(Primary) 42B37; (Secondary) 31B25, 42B25}
\begin{abstract}
 We study the class of functions on Lipschitz-graph domains satisfying a differential-oscillation condition and show that such functions are $\ep$-approximable. As a consequence we obtain the quantitative Fatou theorem in the spirit of works e.g. by Garnett~\cite{ga} and Bortz--Hofmann~\cite{bh}. Such a class contains harmonic functions, as well as non-harmonic ones, for example nonnegative subharmonic functions, as illustrated by our discussion.  
\end{abstract}

\maketitle

\section{Introduction and preliminaries}

In this note we consider the Lipschitz-type domains in the form
\begin{equation}\label{def-dom}
 \Om=\{(x,y)\in \R_{+}^{n+1}: y>\phi(x) \},
\end{equation}
where $\phi:\Rn\to \R$ is an $L$-Lipschitz function. On such domains, we would like to study functions $u\in C^2(\Om)$ which satisfy the following condition on any ball $B_r\subset \Om$ such that $2B_r\subset \Om$ :
\begin{equation}\label{cond-star}\tag{$*$}
 \osc_{B_r}(u)\leq C\,\left(r^{1-n}\,\int_{(1+\eta)B_r}(|\nabla u|^2+|u\Delta u|)\,\ud \mathscr{L}^{n+1}\right)^{\frac12}
\end{equation}
for some $\eta\in [0,1)$ and $C>0$. Such a class has been considered in~\cite{g}, when studying the relationships between the nontangential maximal function and convenient versions of the area function of general (nonharmonic) functions. A priori, it might not be clear how wide is this family of functions. However, Proposition 5.1 in~\cite{g} shows that~\eqref{cond-star} follows from the following pointwise condition: 
\begin{equation}\label{cond-main}\tag{$\#$}
|u\Delta u|\le\theta |\nabla u|^2 \textnormal{ in } \Om
\end{equation}
for some $\theta>0$. However, further restriction on $\theta$ is necessary in order to control the area function by the non-tangential function of $u$. Namely, we need to assume that $0<\theta<1$. From now on we will say that a function $u$ satisfies condition~\eqref{cond-main} if $0<\theta<1$. 

The class of functions~\eqref{cond-main} clearly encloses harmonic ones, but also others, see Proposition~\ref{prop-cond-star} in Section 3 below. However, what is perhaps more important from our point of view is that, the oscillation condition~\eqref{cond-star} holds for several non-harmonic examples, for instance for non-negative $C^2$ subharmonic ones, see Proposition~\ref{prop-31} or non-negative $C^2$ functions $u$ with subharmonic $|\nabla u|^\alpha$, for $\alpha\in(0,2]$, see Proposition 3.2 in Section 3. Estimate~\eqref{cond-star} together with~\eqref{cond-main} imply that 
\begin{equation}\label{est-Morrey}
(\osc_{B_r}(u))^2\lesssim_{n,\theta}r^{1-n} \int_{(1+\eta)B_r}|\nabla u|^2\,\ud \mathscr{L}^{n+1},
\end{equation}
which can be understood as the Morrey-type estimate for $u$.

Our main goal is to show the \emph{$\ep$-approximation property} for functions satisfying~\eqref{cond-main} on domains $\Om$. The importance of this condition comes from an observation that a natural candidate for a Carleson measure of a harmonic function, namely $|\nabla u(x)| \ud x$, may fail to be a Carleson measure (see e.g.~\cite[Section 6, Ch. VIII]{ga}). In order to bypass this problem, the notion of $\ep$-approximability has been introduced (\cite{va1, va2}) and has turned out to be important in the studies of the BMO extension problems and Corona theorems (\cite{ga, ht}), the characterization of the uniform rectifiability (\cite{hlm, hmm, hmmtz}) and in the Quantitiative Fatou theorems (\cite{ga, bh}).

\begin{defn}[$\varepsilon$-approximability]\label{def-ea}
Let $\ep>0$ and $\Om\subset \R^{n+1}_{+}$ satisfy~\eqref{def-dom}. We say that a function $u:\Om\rightarrow\mathbb{R}$ is \emph{$\varepsilon$-approximable}, if there exists a function $\varphi\in BV_{loc}(\Om)$ such that 
\begin{enumerate}
\item $\|u-\varphi\|_{\infty}<\varepsilon$,
\item $|\nabla\varphi|\ud y$ defines a Carleson measure on $\Om$, i.e. for every $x\in\partial\Om$
\begin{equation}\label{Carleson-cond}
\sup_{r\in(0,\diam \Om)}\frac{1}{r^n}\int_{\Om\cap B(x,r)}|\nabla u(y)|\ud  \mathscr{L}^{n+1} \leq C_{\varepsilon}.
\end{equation}
\end{enumerate}
\end{defn}
The latter condition can be equivalently formulated in terms of the surface measure, since domain $\Om$ is given by the Lipschitz graph, and thus the surface measure is $n$-Ahlfors regular on the boundary, implying that $\sigma(B(x,r)\cap \partial \Om)\approx r^n$. Let us also add, that regularity of $\ep$-approximation $\phi$ can be improved, see Remark~\ref{eps-reg} at the end of Section 2 below.

Our main goal is to prove the following result.

\begin{theorem}\label{thm-main}
Let $\Om\subset \R^{n+1}_{+}$ be the Lipschitz-graph domain as in~\eqref{def-dom} and let further $u:\Om\rightarrow\mathbb{R}$ be bounded and satisfy condition~\eqref{cond-main}. Then for every $\varepsilon>0$ function $u$ is $\varepsilon$-approximable in $\Om$.
\end{theorem}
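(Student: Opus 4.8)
The plan is to follow the classical stopping-time / Carleson-measure construction that proves $\ep$-approximability for harmonic functions (as in Garnett \cite{ga} and the refinements in \cite{bh, hmm}), and to replace the mean-value property of harmonic functions — which is the only place harmonicity enters in the usual argument — by the Morrey-type oscillation estimate \eqref{est-Morrey}, which is available here because $u$ satisfies \eqref{cond-main} (hence \eqref{cond-star}). Concretely, after normalizing $\|u\|_\infty\le 1$ and fixing $\ep>0$, I would work with a Whitney decomposition of $\Om$ into dyadic cubes $\{Q\}$ with $\diam Q\approx\dist(Q,\partial\Om)$, and on each Whitney cube control $\osc_Q(u)$ by $\bigl(\ell(Q)^{1-n}\int_{(1+\eta)Q^*}|\nabla u|^2\bigr)^{1/2}$. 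The approximant $\varphi$ is built by a stopping-time procedure: starting from the top cube, one declares a ``stopping cube'' the maximal descendants $Q$ where the oscillation of $u$ (relative to its value at the parent stopping cube) first exceeds $\ep$, sets $\varphi$ equal to a suitable constant (the average of $u$) on the corridor between consecutive stopping cubes, and iterates. By construction $\|u-\varphi\|_\infty\lesssim\ep$, so the content of the theorem is the Carleson estimate \eqref{Carleson-cond} for $|\nabla\varphi|\,d\mathscr L^{n+1}$, i.e. that $\varphi$ jumps across only a Carleson-sparse family of cubes.

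The key steps, in order, are: (1) set up the Whitney decomposition and, for a fixed boundary cube $\Delta=B(x,r)\cap\partial\Om$, the Carleson box $T(\Delta)=\Om\cap B(x,r)$; (2) run the stopping-time algorithm inside $T(\Delta)$ to produce the generation of stopping cubes $\{Q_j^{(k)}\}$; (3) observe that $|\nabla\varphi|\,d\mathscr L^{n+1}$ is supported (as a measure, its jump part) on the union of the lateral boundaries of these cubes, so $\int_{T(\Delta)}|\nabla\varphi|\lesssim\ep\sum_j \ell(Q_j)^n$ plus the sum of $|\nabla\varphi|$ on the corridors where $\varphi$ equals a Lipschitz interpolation of the chosen constants; (4) prove the packing estimate $\sum_j \ell(Q_j)^n\lesssim_\ep r^n$. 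For step (4) — the crux — I would use the square-function input: on each stopping cube $Q_j$ one has by \eqref{est-Morrey} that $\ep^2\lesssim\osc_{Q_j}(u)^2\lesssim\ell(Q_j)^{1-n}\int_{(1+\eta)Q_j^*}|\nabla u|^2$, so that
\[
\ep^2\sum_j \ell(Q_j)^n\lesssim\sum_j\ell(Q_j)\int_{(1+\eta)Q_j^*}|\nabla u|^2\,d\mathscr L^{n+1}\lesssim\int_{T(C\Delta)}|\nabla u(y)|^2\,\dist(y,\partial\Om)\,d\mathscr L^{n+1},
\]
using that the dilated Whitney cubes $(1+\eta)Q_j^*$ have bounded overlap and that $\ell(Q_j)\approx\dist(y,\partial\Om)$ on $Q_j^*$. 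It then remains to bound the right-hand side by $C r^n$, i.e. to know that $|\nabla u|^2\,\dist(\cdot,\partial\Om)\,d\mathscr L^{n+1}$ is a Carleson measure on $\Om$ with constant controlled by $\|u\|_\infty$. This is exactly the statement that the (Lusin/area) square function of a bounded $u$ satisfying \eqref{cond-main} is controlled by $\|u\|_\infty$; it is the content (in Carleson-measure form) of the local energy estimate for this class, obtained as in \cite{g} by testing the identity $\Delta(u^2)=2|\nabla u|^2+2u\Delta u$ against a cutoff and absorbing the $|u\Delta u|\le\theta|\nabla u|^2$ term using $\theta<1$ — this is precisely where the restriction $0<\theta<1$ is used. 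I would state this local energy/Carleson estimate as a lemma (citing or reproducing the short integration-by-parts argument) and invoke it here.

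The main obstacle is step (4), and within it the two technical points that distinguish this from the harmonic case: first, one must be careful that the oscillation estimate \eqref{est-Morrey} is only valid on balls $B_r$ with $2B_r\subset\Om$, so the stopping cubes and their dilates must be chosen strictly interior Whitney cubes and the geometry of the Lipschitz graph (Ahlfors regularity, existence of nontangential cones, the fact that $\sigma(\Delta)\approx r^n$) must be used to sum the $\ell(Q_j)^n$ against the surface measure; second, one must verify that the absorption argument giving the Carleson bound for $|\nabla u|^2\dist(\cdot,\partial\Om)$ really does only cost a constant depending on $n$ and $\theta$ and on $\|u\|_\infty$, uniformly over all boundary balls — this requires choosing cutoffs adapted to the Carleson boxes $T(C\Delta)$ and handling the boundary term on $\partial T(C\Delta)\cap\Om$ via a standard good-$\lambda$ or Whitney-chaining argument so that the final constant is genuinely independent of $x$ and $r$. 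Once the square-function/Carleson lemma is in hand, the rest is the standard bookkeeping of the $\ep$-approximation construction, and I would only sketch it. Finally, Remark~\ref{eps-reg} on improving the regularity of $\varphi$ (from $BV_{loc}$ to, say, Lipschitz or $C^\infty$ off a sparse set) follows by the usual mollification of $\varphi$ on each corridor, which does not affect either \eqref{Carleson-cond} or $\|u-\varphi\|_\infty$ beyond a harmless constant.
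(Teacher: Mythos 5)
Your overall architecture (dyadic stopping time, piecewise-constant approximant, packing estimate, Carleson bound for $|\nabla u|^2\dist(\cdot,\partial\Om)\,\ud\mathscr L^{n+1}$) matches the paper's, but there is a genuine gap at the crux, step (4). You write that on each stopping cube $Q_j$ one has $\ep^2\lesssim\osc_{Q_j}(u)^2$, and then apply \eqref{est-Morrey}. This is false as stated: the stopping condition records a jump of the \emph{value} of $u$ between the stopping cube and its parent stopping cube (which may lie many dyadic generations above), not a large oscillation of $u$ \emph{within} $Q_j$ or within any single Whitney region attached to it. The function can be nearly constant on $Q_j$ while still differing by $\ep$ from the value at the parent, in which case \eqref{est-Morrey} applied to $Q_j$ gives nothing. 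This is precisely why the paper splits the stopping cubes into red and blue sets $T(\hat Q_j)$: for red ones (large oscillation on $T(\hat Q_j)$) the Morrey estimate \eqref{est-Morrey} does apply directly, exactly as you propose; but for blue ones the $\ep$-jump is only visible to the truncated nontangential maximal function on the upper face of $\hat Q_j$, and converting that into a square-function (packing) bound requires the $L^2$ inequality $\|N_\alpha\|_{L^2}\lesssim\|A_\alpha\|_{L^2}$ of \cite{g} (Theorem 1.1(b) there), which is a substantive input that needs condition \eqref{cond-main} with $\theta<1$ and does not follow from \eqref{est-Morrey} alone. Your proposal has no mechanism to handle the blue case.

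A secondary issue is your final Carleson bound for $|\nabla u|^2\dist(\cdot,\partial\Om)$. The absorption identity $\Delta(u^2)=2|\nabla u|^2+2u\Delta u\ge 2(1-\theta)|\nabla u|^2$ is correct and is indeed where $\theta<1$ enters, but the integration-by-parts against a cutoff weighted by $\dist(\cdot,\partial\Om)$ (or by $y-\phi(x)$) is not routine on a Lipschitz graph: the distributional Laplacian of $y-\phi(x)$ is $-\Delta\phi$, which is not a controllable object for merely Lipschitz $\phi$, so the boundary/weight terms cannot be dismissed as ``standard.'' The paper avoids this by instead proving the $L^2$ bound $\int_Q(A_{\alpha,0,l(Q)}u)^2\lesssim l(Q)^n$ (Proposition~\ref{prop24}) via a John--Nirenberg-type iteration combined with the good-$\lambda$ distributional inequalities of \cite{g}; that, plus a Fubini/shadow argument, yields the Carleson property. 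So the lemma you defer to is essentially Proposition~\ref{prop24}, and its proof is not the short Caccioppoli computation you sketch but the bulk of the Appendix. To repair your argument you would need to add the red/blue dichotomy (or an equivalent device) in step (4) and replace the integration-by-parts lemma by the area-function $L^2$ estimate.
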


The result generalizes the existing ones, as it is to best of our knowledge, first $\ep$-approximability result for functions that need not be solutions of PDEs of divergence form. Moreover, we would like to emphasize that condition~\eqref{cond-main} can be obsolete for some classes of functions and~\eqref{cond-star} instead suffices, as illustrated by nonnegative subharmonic functions, see Proposition~\ref{prop-31} in Section 3. This observation follows from a brief analysis of the proofs of Theorem 1.1 and Lemmas 4.3 and 4.5 in~\cite{g}.

The key consequence of Theorem~\ref{thm-main} is the following Quantitative Fatou Theorem in Corollary~\ref{cor-qft} (see Definition~\ref{def-count-f} of the counting function). 

\begin{cor}[Quantitative Fatou Theorem]\label{cor-qft}
Let $\Om\subset \R^{n+1}_{+}$ be the Lipschitz-graph domain as in~\eqref{def-dom} and let further $u:\Om\rightarrow\mathbb{R}$ satisfy condition~\eqref{cond-main} and be bounded with $\|u\|_{\infty}\le 1$. Then for every point $\om\in\partial\Omega$
\begin{align*}
\sup_{\substack{0<r<r_0}}\frac{1}{r^{n}}\int_{\partial\Omega\cap B(\om,r)}N(r,\ep,\beta)(z)d\sigma(z)\le C(\ep,\alpha,\beta,n,\Omega),
\end{align*}
where $\ep,\alpha,\beta$ are constants in the definition of the counting function $N$. In particular, constant $C$ is a independent of $u$. 
\end{cor}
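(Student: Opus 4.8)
The plan is to follow the classical passage from $\ep$-approximability to a quantitative Fatou statement, in the form used by Garnett~\cite{ga} and Bortz--Hofmann~\cite{bh}; the only new ingredient is Theorem~\ref{thm-main}, which supplies the approximant for the non-harmonic class~\eqref{cond-main}. Fix $\ep>0$ and $\om\in\partial\Om$. First I would apply Theorem~\ref{thm-main} to $u$ with the smaller tolerance $\ep/4$ to obtain $\varphi\in BV_{loc}(\Om)$ with $\|u-\varphi\|_\infty<\ep/4$ and such that $\nu:=|\nabla\varphi|\,\ud\mathscr{L}^{n+1}$ is a Carleson measure on $\Om$ with a norm $C_\ep$ that depends on $\ep,n,\Om$ (the boundedness $\|u\|_\infty\le1$ entering here) but not on the particular function $u$; this is precisely the source of the claimed independence of the final constant from $u$.

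Next I would transfer the counting function from $u$ to $\varphi$. Any two points at which $u$ differs by at least $\ep$ are two points at which $\varphi$ differs by at least $\ep-2\cdot\tfrac\ep4=\tfrac\ep2$, so, reading off Definition~\ref{def-count-f}, one obtains the pointwise domination $N(r,\ep,\beta)(z)\le N_\varphi(r,\tfrac\ep2,\beta)(z)$ for every $z\in\partial\Om$, where $N_\varphi$ denotes the same counting function with $\varphi$ in place of $u$; hence it suffices to bound the counting function of the $BV_{loc}$ function $\varphi$. Fix a Whitney decomposition $\{Q\}$ of $\Om$ and, for each $Q$, a fixed dilate $Q^\ast\supset Q$ with $Q^\ast\subset\Om$. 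For a fixed $z$, each oscillation of size $\ge\tfrac\ep2$ recorded by $N_\varphi(r,\tfrac\ep2,\beta)(z)$ occurs, by the definition of the counting function, between two points of the cone $\Gamma_\beta(z)$ lying at comparable heights and at mutual distance of that same order, which I may take $\approx\ell(Q)$ for a suitable $Q\subset\Gamma_\beta(z)$ with $\ell(Q)\lesssim r$; joining these two points by a Harnack chain inside $\Om$ and averaging the bound $\tfrac\ep2\le\int_\gamma|\nabla\varphi|$ over a tube of width $\approx\ell(Q)$ about the chain (a Fubini argument, valid for $BV$ functions, where the improved regularity of $\varphi$ from Remark~\ref{eps-reg} may also be invoked) yields $\nu(Q^\ast)\gtrsim\ep\,\ell(Q)^{n}$. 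Since consecutive recorded oscillations sit at geometrically separated heights, the cubes produced in this way form a bounded-overlap family, and therefore
\[ N_\varphi(r,\tfrac\ep2,\beta)(z)\ \lesssim\ \frac1\ep\sum_{\substack{Q\subset\Gamma_\beta(z)\\ \ell(Q)\lesssim r}}\frac{\nu(Q^\ast)}{\ell(Q)^{n}}. \]

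Finally I would integrate this bound over $z\in\partial\Om\cap B(\om,r)$ and exchange summation and integration. For a fixed Whitney cube $Q$, the set $\{z\in\partial\Om:\ Q\subset\Gamma_\beta(z)\}$ is contained in a surface ball of radius $\approx\ell(Q)$, and hence has $\sigma$-measure $\lesssim\ell(Q)^{n}$ by the $n$-Ahlfors regularity of $\sigma$; consequently
\[ \int_{\partial\Om\cap B(\om,r)}N_\varphi(r,\tfrac\ep2,\beta)(z)\,\ud\sigma(z)\ \lesssim\ \frac1\ep\sum_{Q\subset\Om\cap B(\om,Cr)}\nu(Q^\ast)\ \lesssim\ \frac1\ep\,\nu\big(\Om\cap B(\om,C'r)\big)\ \lesssim\ \frac{C_\ep}{\ep}\,r^{n}, \]
where the last two steps use the bounded overlap of $\{Q^\ast\}$ and the Carleson bound for $\nu$. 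Dividing by $r^{n}$ and taking the supremum over $0<r<r_0$ gives the asserted estimate with $C(\ep,\alpha,\beta,n,\Om)\approx C_\ep/\ep$, independent of $u$. I expect the only genuinely delicate point to be the middle step: matching the combinatorial quantity in Definition~\ref{def-count-f} to a bounded-overlap family of Whitney cubes and verifying the lower bound $\nu(Q^\ast)\gtrsim\ep\,\ell(Q)^{n}$ on each of them --- the mere local $BV$ regularity of $\varphi$ is harmless, since everything takes place inside cones truncated away from $\partial\Om$ --- while the outer Fubini step and the use of the Carleson condition are then routine.
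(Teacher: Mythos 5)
The paper does not actually write out a proof of Corollary~\ref{cor-qft}: it declares the argument to be a verbatim repetition of the proof of Lemma~2.9 in~\cite{kkpt}, with Theorem~\ref{thm-main} supplying the $\ep$-approximability hypothesis. Your outline --- pass to an $\ep/4$-approximant $\varphi$, dominate $N(r,\ep,\beta)$ pointwise by $N_\varphi(r,\ep/2,\beta)$, localize each recorded oscillation of $\varphi$ to a region carrying a definite amount of the Carleson measure $|\nabla\varphi|\,\ud\mathscr{L}^{n+1}$, invoke bounded overlap, and conclude by Fubini together with the Carleson bound and the Ahlfors regularity of $\sigma$ --- is exactly that argument, so in substance you are reconstructing the proof the paper points to, and the first and last steps of your sketch are correct as written.

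One assertion in the middle step is wrong as stated and is precisely where the cited proof does its real work. Admissibility only imposes the upper bound $|x_n-x|<\beta|x_{n-1}-x|$, with no lower bound, so the two points of a recorded oscillation need \emph{not} lie at comparable heights; what is true is that their mutual distance is comparable to $|x_{n-1}-x|$, the scale of the \emph{outer} point (and note the cone aperture is $\alpha$, not $\beta$). Consequently the path joining $x_{n-1}$ to $x_n$ may traverse arbitrarily many Whitney scales, and the claim ``$\nu(Q^\ast)\gtrsim\ep\,\ell(Q)^n$ for a single cube $Q$ of sidelength $\approx$ the common height'' does not follow from averaging over a tube about a Harnack chain; a naive dyadic splitting of the path loses a factor of the number of scales crossed, which is unbounded. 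The standard repair is to assign to the $j$-th oscillation the whole band of scales between $|x_j-x|$ and $|x_{j-1}-x|$ inside a fattened cone --- these bands are pairwise disjoint in $j$, which is what replaces your bounded-overlap claim --- and to run the endpoint-averaging with the smoothed approximant of Remark~\ref{eps-reg}, whose pointwise gradient bound controls the error incurred by moving the endpoints. You correctly flagged this as the delicate point rather than claiming it, but as written the ``comparable heights'' reduction is a genuine gap, not merely a detail; once it is replaced by the scale-band bookkeeping of~\cite{kkpt}, the Fubini and Carleson steps go through exactly as you describe and yield the stated bound with constant $\approx C_\ep/\ep$, independent of $u$.
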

The proof is a verbatim repetition of the proof of Lemma 2.9 in \cite{kkpt} and, therefore, we omit it.

Let us remark, that the notion of the counting function is known in the literature, see for instance \cite{ga, kkpt, bh}. It provides a way to estimate how much a function oscillates while approaching the boundary. The aforementioned works study the counting function in the context of the Quantitative Fatou Property, which generalizes the classical Fatou theorem stating that non-tangential limit exists at a.e. point of boundary. In the language of counting function it reads that counting function $N$ is finite a.e. 
\medskip

\centerline{ {\sc Preliminaries and notation} }
\medskip

%

Let us recall some necessary definitions and notation used throughout our work. 
\smallskip

\emph{From now on, unless specified otherwise, by $\Om$ we always denote a Lipschitz-type domain as in~\eqref{def-dom}.}

\begin{defn}\label{def-cone} 
For $\alpha>0$ a cone with a vertex at point $(x, \phi(x))\in \partial \Om$ and aperture $\alpha$ is defined as follows
\[
\Gamma_{\alpha}(x):=\{(z,y)\in\mathbb{R}^{n+1}_{+}:|z-x|<\alpha(y-\phi(x))\}.
\]
\end{defn}
Notice that for every $x\in \Rn$ a cone $\Gamma_{\alpha}(x)$ is congruent to a cone $\{(x,y)\in\mathbb{R}^{n+1}_{+}:|x|<\alpha y\}$. However, such cones need not be contained in domain $\Om$. Therefore, we introduce 
the \emph{truncated cone}:
\[
\Gamma_{\alpha,s,t}(x):=\Gamma_{\alpha}(x)\cap\{(z,y):\phi(z)+s<y<\phi(z)+t\},
\]
where $0\le s\le t\le\infty$. In that notation $\Gamma_{\alpha}(x)=\Gamma_{\alpha,0,\infty}(x)$. Since function $\phi$ is $L$-Lipschitz, it holds that $\Gamma_{\alpha,0,t}(x)\subset\Om$ only for $\alpha<\frac{1}{L}$ (and hence, from now on we only consider $\alpha<\frac{1}{L}$). 

%
%

\begin{defn}[Counting function]\label{def-count-f}
Let $\Gamma_{\alpha,0,r}(x)$ be a truncated cone with the vertex at a point $(x, \phi(x)) \in \partial \Om$. Let $u$ be a continuous function defined on $\Omega$. Fix $\ep>0$, $0<\beta<1$ and $0<r<1$. We say that a sequence of points $x_n\in\Gamma_{\alpha, 0, r}(x)$ is $(r,\ep,\beta,x)$-admissible for $u$ if 
\begin{align*}
|u(x_n)-u(x_{n-1})|\ge\ep\quad\hbox{and}\quad |x_n-x|<\beta |x_{n-1}-x|.
\end{align*}
Set 
$$
N(r, \ep, \beta)(x):=\sup\{k: \textnormal{there exists an } (r, \ep, \beta, x)\textnormal{-admissible sequence of length }k\}.
$$
We will call $N$ a \emph{counting function}.
\end{defn}

\begin{defn}[Area function]
Let $f:\Om\rightarrow[0,\infty]$ be a measurable function. The \emph{area function} associated to the density $f$ is defined by
\[
(A_{\alpha}f)(x)=\left(\int_{\Gamma_{\alpha}(x)}f(z,y)(y-\phi(x))^{1-n}\ud z\ud y\right)^{\frac{1}{2}},\quad x\in \Rn.
\]
\end{defn}
Similarly, we define the truncated version of the area function $A_{\alpha,s,t}f$ with respect to cones $\Gamma_{\alpha,s,t}$.

In what follows we are mostly interested in the case $f=|\nabla u|^2$ for a function $u\in C^2(\Om)$. Then we write
\[
(A_{\alpha,s,t}u)(x):=(A_{\alpha,s,t}|\nabla u|^2)(x)=\left(\int_{\Gamma_{\alpha,s,t}(x)}|\nabla u(z,y)|^2(y-\phi(x))^{1-n}\ud z\ud y\right)^{\frac{1}{2}}.
\]
\begin{defn}[Nontangential maximal function]
Let $f:\Om\rightarrow[0,\infty]$ be a continuous function. The \emph{nontangential maximal function function of $u$} is defined as follows
\[
(N_{\alpha}f)(x)=\sup_{\Gamma_{\alpha}(x)}|f(y)|,\quad x\in\Rn. 
\]
\end{defn}
As above, the truncated nontangential maximal function of $u$, denoted by $N_{\alpha,s,t}u$ is defined analogously with respect to cones $\Gamma_{\alpha,s,t}$.

\begin{defn}[Carleson measure]\label{def: Carleson-msp}
 Let $\Omega$ be an open set in $\R^{n+1}$. We say that a (positive) Borel measure $\mu$ on $\Omega$ is an \emph{$\alpha$-Carleson measure on  $\Omega$}, if there exists a constant $C>0$ such that
\begin{equation*}
\mu(\Omega \cap B(x,r))\leq C r^{n},\quad \text{for
all }x \in \partial\Omega\text{ and }r>0.
\end{equation*}
The \emph{Carleson measure constant} of $\mu$ is defined as the infimum of constants $C$ above.
\end{defn}

\begin{defn}[Local BV functions]
 Let $\Omega$ be an open set in $\R^{n+1}$. We say that an $L^1_{loc}$-function $f$ has \emph{locally bounded variation in $\Om$}, and denote it by $f\in BV_{loc}(\Om)$, if for any open set $\Om'\Subset \Om$ the total variation of $f$ over $\Om'$ is finite:
\begin{equation*}
 \sup_{\tiny{\Psi\in C_{0}^{1}(\Om', \R^{n+1}),\,\|\Psi\|_{L^{\infty}}\leq 1}}\int_{\Om'} f(x)\,{\rm div } \Psi (x)\,\ud x <\infty.
\end{equation*}
\end{defn}
Recall, that the latter expression defines a (Radon) measure on $\Om$, see \cite[Section 5.1]{eg}.

\section{Proof of Theorem~\ref{thm-main}}

Let us briefly describe our approach to the proof of the main result.  First, we recall and formulate some auxiliary notions regarding curved cubes, the associated red and blue sets and the stopping condition allowing us to choose the appropriate families of cubes. Then we construct function $\phi_1$, the first approximation of $\phi$, see~\eqref{def-phi1} and show in Proposition~\ref{Claim1} that $\phi_1$ is the Carleson measure. The proof of Proposition~\ref{Claim1} relies on two auxiliary observations, namely Lemmas~\ref{lem1} and~\ref{lem2}. The first one gives a lower bound estimate for area function and is applied in the proof of Lemma~\ref{lem2} to control the sum of volumes of cubes obtained by the stopping procedure. Then, we construct the function $\phi$, see~\eqref{def-eps-apr} and show that it $\ep$-approximates function $u$ in the $L^\infty$-norm. In order to show condition (2) in Definition~\ref{def-ea}, we study the decomposition of the gradient of $\phi$, see~\eqref{Carl-decomp}, and show that each of its terms leads to the Carleson condition, see estimates (Car1) and (Car2). An important auxiliary result, perhaps of the independent interest, is presented in Proposition~\ref{prop24} and proved in the Appendix. It gives the $L^2$ bounds for the area function on cubes. The above approach has been inspired by the discussion in~\cite[Section 6, Ch. VIII]{ga}) and also by~\cite{hmm}.

Let us first set up the stage for a geometric constructions we use to prove our result.
\smallskip

{\bf Curved cubes and associated centers}. Fix $\varepsilon>0$ and denote by $Q_0$ the unit cube in $\Rn$. We denote by $\{Q_{j_1,\ldots, j_n}^m\}$ the family of dyadic cubes in the dyadic decomposition of $Q_0$:
$$
Q_{j_1,\ldots, j_n}^m=\{(x_1,\ldots,x_n) \in\mathbb{R}^n: j_i 2^{-m}\le x_i\le (j_i+1)2^{-m}\},\quad \hbox{for } m\in\mathbb{N} \hbox{ and } j_1,\dots,j_n\in \{0,\ldots, 2^m-1\}. 
$$
In the case parameters $m$ and $j_1,\ldots,j_n$ are fixed or their exact values are not important for the discussion, we will write $Q$ to denote a cube in the $m$-th generation for some $m$. For the sake of notation, in what follows we will usually denote the side length of $Q$ by $l(Q)$ rather than $2^{-m}$.

Let further
$$
\hat{Q}_0=\{(x,y)\in\mathbb{R}^{n+1}:x\in Q_0, \phi(x)\le y\le 1+\phi(x)\}
$$
be an associated curved unit cube in $\mathbb{R}^{n+1}$, where $\phi :Q_0\rightarrow\mathbb{R}$ is a Lipschitz function. Similarly, for a given cube $Q$, we define the \emph{curved cube}
\[
\hat{Q}=\{(x,y)\in\mathbb{R}^{n+1}:x\in Q, \phi(x)\le y\le \phi(x)+l(Q)\}.
\]
In what follows we will often omit the word \emph{curved} when discussing sets $\hat{Q}$ and instead simply write cube.

Let $x_{\hat{Q}}$ denote a center of a (curved) cube $\hat{Q}$, i.e. $x_{\hat{Q}}:=(x_{Q},\phi(x_Q)+2^{-m-1})$, where $x_Q$ is a center of $Q$. Note that since by~\eqref{def-dom} it holds that $\phi$ is $L$-Lipschitz, we have the following inclusions:
\begin{equation}\label{ball-box}
B\big(x_{\hat{Q}}, l(Q)\big) \subset \hat{Q}\subset B\big(x_{\hat{Q}}, C(n,L)l(Q)\big),\quad C(n,L):= \sqrt{n+1+L^2}.
\end{equation}
Moreover, we define the \emph{associated center of $\hat{Q}$} as follows:
\begin{equation}\label{def-ass-center}
x^{l}_{\hat{Q}}=x_{\hat{Q}}+\overline{e_{n+1}}l(Q)=(x_Q,\phi(x_Q)+2^{-m-1}+l(Q))=\left(x_Q,\phi(x_Q)+\frac{3}{2}2^{-m}\right).
\end{equation}
The name of this point is justified by the fact that $x^{l}_{\hat{Q}}$ does not lie inside $\hat{Q}$, and is the center of the  curved cube lying directly above cube $\hat{Q}$ and obtained by shifting up $\hat{Q}$ in $l(Q)$, see Figure 1.
\begin{figure}[h]
  \centering
  \begin{minipage}[b]{0.4\textwidth}
    \definecolor{uuuuuu}{rgb}{0.26666666666666666,0.26666666666666666,0.26666666666666666}
\begin{tikzpicture}[scale=0.22][line cap=round,line join=round,>=triangle 45,x=1cm,y=1cm]
\begin{axis}[
x=1cm,y=1cm,
axis x line=middle,
axis y line=none,
xmin=-3.334991070439598,
xmax=15.037116070994667,
ymin=0,
ymax=25.97316353722639,
xtick=\empty,
ytick={-10,0,...,40},]
\clip(-0.334991070439598,-1.436300260977678) rectangle (14.037116070994667,30.97316353722639);
\draw [line width=2pt] (0,4)-- (2,2);
\draw [line width=2pt] (6,5)-- (2,2);
\draw [line width=2pt] (6,5)-- (7,3);
\draw [line width=2pt] (7,3)-- (8,4);
\draw [line width=2pt] (8,4)-- (10,2);
\draw [line width=2pt] (0,4)-- (0,14);
\draw [line width=2pt] (0,14)-- (2,12);
\draw [line width=2pt] (2,12)-- (6,15);
\draw [line width=2pt] (6,15)-- (7,13);
\draw [line width=2pt] (7,13)-- (8,14);
\draw [line width=2pt] (8,14)-- (10,12);
\draw [line width=2pt] (10,12)-- (10,2);
\draw [line width=2pt,dash pattern=on 1pt off 2pt] (0,14)-- (0,24);
\draw [line width=2pt,dash pattern=on 1pt off 2pt] (0,24)-- (2,22);
\draw [line width=2pt,dash pattern=on 1pt off 2pt] (2,22)-- (6,25);
\draw [line width=2pt,dash pattern=on 1pt off 2pt] (6,25)-- (7,23);
\draw [line width=2pt,dash pattern=on 1pt off 2pt] (7,23)-- (8,24);
\draw [line width=2pt,dash pattern=on 1pt off 2pt] (8,24)-- (10,22);
\draw [line width=2pt,dash pattern=on 1pt off 2pt] (10,22)-- (10,12);
\draw (5.046398583363668,10.912990401724846) node[scale=4][anchor=north west] {$x_{\hat{Q}}$};
\draw (5.046398583363668,1.6715182553375463) node[scale=4][anchor=north west] {$x_Q$};
\draw (5.046398583363668,20.865345020911167) node[scale=4][anchor=north west] {$x_{\hat{Q}}^{l}$};
\draw (12.014299592262855,2.007274951677194) node[scale=3][anchor=north west] {$\mathbb{R}^n$};
\begin{scriptsize}
\draw [fill=uuuuuu] (5.000613617654646,9.250460213240984) circle (2pt);
\draw [fill=black] (5,0) circle (2.5pt);
\draw [fill=uuuuuu] (4.998249011993611,19.24868675899521) circle (2pt);
\end{scriptsize}
\end{axis}
\end{tikzpicture}
    \caption{\small The center $x_{\hat{Q}}$ of a cube $\hat{Q}$ and its associated center $x_{\hat{Q}}^{l}$.}
 \end{minipage}
  \begin{minipage}[b]{0.4\textwidth}
    \definecolor{ffttww}{rgb}{1,0.2,0.4}
\definecolor{zzttqq}{rgb}{0.6,0.2,0}
\definecolor{uuuuuu}{rgb}{0.26666666666666666,0.26666666666666666,0.26666666666666666}
\begin{tikzpicture}[scale=0.22][line cap=round,line join=round,>=triangle 45,x=1cm,y=1cm]
\begin{axis}[
x=1cm,y=1cm,
axis x line=middle,
axis y line=none,
xmin=-3.508318523625785,
xmax=13.490725158501984,
ymin=0,
ymax=21.671283001188026,
xtick=\empty,
ytick={0,5,...,25},]
\clip(-0.508318523625785,-0.264312172935558) rectangle (15.490725158501984,25.671283001188026);
\fill[line width=2pt,dash pattern=on 1pt off 1pt,color=zzttqq,fill=zzttqq,fill opacity=0.10000000149011612] (0,19) -- (2,17) -- (6,20) -- (7,18) -- (8,19) -- (10,17) -- (10,7) -- (8,9) -- (7,8) -- (6,10) -- (2,7) -- (0,9) -- cycle;
\draw [line width=2pt] (0,4)-- (2,2);
\draw [line width=2pt] (6,5)-- (2,2);
\draw [line width=2pt] (6,5)-- (7,3);
\draw [line width=2pt] (7,3)-- (8,4);
\draw [line width=2pt] (8,4)-- (10,2);
\draw [line width=2pt] (0,4)-- (0,14);
\draw [line width=2pt] (0,14)-- (2,12);
\draw [line width=2pt] (2,12)-- (6,15);
\draw [line width=2pt] (6,15)-- (7,13);
\draw [line width=2pt] (7,13)-- (8,14);
\draw [line width=2pt] (8,14)-- (10,12);
\draw [line width=2pt] (10,12)-- (10,2);
\draw (5.0229275714402455,20.29170765734653) node[scale=3][anchor=north west] {$x_{\hat{Q}}^{l}$};
\draw (10.908815888819746,1.697331826900235) node[scale=3][anchor=north west] {$\mathbb{R}^n$};
\draw [line width=2pt,dash pattern=on 1pt off 1pt] (0,19)-- (0,14);
\draw [line width=2pt,dash pattern=on 1pt off 1pt,color=zzttqq] (0,19)-- (2,17);
\draw [line width=2pt,dash pattern=on 1pt off 1pt,color=zzttqq] (2,17)-- (6,20);
\draw [line width=2pt,dash pattern=on 1pt off 1pt,color=zzttqq] (6,20)-- (7,18);
\draw [line width=2pt,dash pattern=on 1pt off 1pt,color=zzttqq] (7,18)-- (8,19);
\draw [line width=2pt,dash pattern=on 1pt off 1pt,color=zzttqq] (8,19)-- (10,17);
\draw [line width=2pt,dash pattern=on 1pt off 1pt,color=zzttqq] (10,17)-- (10,7);
\draw [line width=2pt,dash pattern=on 1pt off 1pt,color=zzttqq] (10,7)-- (8,9);
\draw [line width=2pt,dash pattern=on 1pt off 1pt] (8,9)-- (7,8);
\draw [line width=2pt,dash pattern=on 1pt off 1pt,color=zzttqq] (7,8)-- (6,10);
\draw [line width=2pt,dash pattern=on 1pt off 1pt,color=zzttqq] (6,10)-- (2,7);
\draw [line width=2pt,dash pattern=on 1pt off 1pt,color=zzttqq] (2,7)-- (0,9);
\draw [line width=2pt,dash pattern=on 1pt off 1pt,color=zzttqq] (0,9)-- (0,19);
\draw (1.542025878366348,6.811124736896671) node[scale=3][anchor=north west] {$\hat{Q}$};
\draw [color=ffttww](1.0357129048283267,17.063962451041636) node[scale=3][anchor=north west] {$T(\hat{Q})$};
\begin{scriptsize}
\draw [fill=uuuuuu] (4.998249011993611,19.24868675899521) circle (2pt);
\end{scriptsize}
\end{axis}
\end{tikzpicture}

    \caption{\small The set $T(\hat{Q})$ (brown set) with respect to the set $\hat{Q}$ (set bounded by black line).}
  \end{minipage}
\end{figure}

\begin{figure}
    \centering
    \definecolor{zzttqq}{rgb}{0.6,0.2,0}
\begin{tikzpicture}[scale=0.4][line cap=round,line join=round,>=triangle 45,x=1cm,y=1cm]
\clip(-0.2503456737040986,0.6736986668985544) rectangle (13.196190342174265,10.866408183307874);
\fill[line width=2pt,color=zzttqq,fill=zzttqq,fill opacity=0.10000000149011612] (0,1) -- (0,5) -- (2,6) -- (4,5.6) -- (4,1.6) -- (2,2) -- cycle;
\fill[line width=2pt,color=zzttqq,fill=zzttqq,fill opacity=0.10000000149011612] (5,1.4) -- (5,2.4) -- (5.999605663713984,2.200000155501131) -- (5.999211389500886,1.200157722099823) -- cycle;
\fill[line width=2pt,color=zzttqq,fill=zzttqq,fill opacity=0.10000000149011612] (6,3.2) -- (7,3) -- (8,4) -- (8,2) -- (7,1) -- (5.999211389500886,1.200157722099823) -- cycle;
\draw [line width=1pt] (0,1)-- (2,2);
\draw [line width=1pt] (2,2)-- (7,1);
\draw [line width=1pt] (7,1)-- (8,2);
\draw [line width=1pt] (0,1)-- (0,9);
\draw [line width=1pt] (0,9)-- (2,10);
\draw [line width=1pt] (2,10)-- (7,9);
\draw [line width=1pt] (7,9)-- (8,10);
\draw [line width=1pt] (8,10)-- (8,2);
\draw [line width=1pt] (0,5)-- (2,6);
\draw [line width=1pt] (2,6)-- (4,5.6);
\draw [line width=1pt] (4,5.6)-- (4,1.6);
\draw [line width=1pt] (8,4)-- (7,3);
\draw [line width=1pt] (7,3)-- (6,3.2);
\draw [line width=1pt] (6,3.2)-- (5.999211389500886,1.200157722099823);
\draw [line width=1pt] (5.999605663713984,2.200000155501131)-- (5,2.4);
\draw [line width=1pt] (5,2.4)-- (5,1.4);
\draw [line width=1pt,color=zzttqq] (0,1)-- (0,5);
\draw [line width=1pt,color=zzttqq] (0,5)-- (2,6);
\draw [line width=1pt,color=zzttqq] (2,6)-- (4,5.6);
\draw [line width=1pt,color=zzttqq] (4,5.6)-- (4,1.6);
\draw [line width=1pt,color=zzttqq] (4,1.6)-- (2,2);
\draw [line width=1pt,color=zzttqq] (2,2)-- (0,1);
\draw [line width=1pt,color=zzttqq] (5,1.4)-- (5,2.4);
\draw [line width=1pt,color=zzttqq] (5,2.4)-- (5.999605663713984,2.200000155501131);
\draw [line width=1pt,color=zzttqq] (5.999605663713984,2.200000155501131)-- (5.999211389500886,1.200157722099823);
\draw [line width=1pt,color=zzttqq] (5.999211389500886,1.200157722099823)-- (5,1.4);
\draw [line width=1pt,color=zzttqq] (6,3.2)-- (7,3);
\draw [line width=1pt,color=zzttqq] (7,3)-- (8,4);
\draw [line width=1pt,color=zzttqq] (8,4)-- (8,2);
\draw [line width=1pt,color=zzttqq] (8,2)-- (7,1);
\draw [line width=1pt,color=zzttqq] (7,1)-- (5.999211389500886,1.200157722099823);
\draw [line width=1pt,color=zzttqq] (5.999211389500886,1.200157722099823)-- (6,3.2);
\draw (1.608993500668486,4.495964735552049) node[scale=0.7][anchor=north west] {$\hat{Q}_1$};
\draw (6.447341436599947,2.692639205725785) node[scale=0.7][anchor=north west] {$\hat{Q}_2$};
\draw (4.9,2.4182201033609187) node[scale=0.7][anchor=north west] {$\hat{Q}_3$};
\draw (3.37590675040304,8.573048542115776) node[anchor=north west] {$R(\hat{Q})$};
\end{tikzpicture}

    \caption{\small An example of how a set $R(\hat{Q})$ may look like. Cubes $\hat{Q}_1, \hat{Q}_2, \hat{Q}_3$ are removed from cube $\hat{Q}$ to obtain $R(\hat{Q})$. In general there may be infinitely many sets that are removed from $\hat{Q}$.}
    
\end{figure}
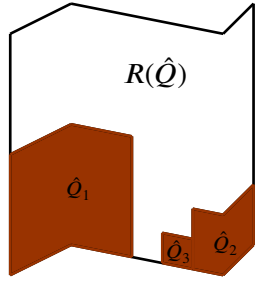

\smallskip

{\bf Stopping time conditions}. Let us now define a stopping procedure.

Set 
\begin{align*}
G_0&:=\{\hat{Q_0}\} \hbox{ and denote by } \\
G_1&:=\hbox{ a family of maximal curved cubes $\hat{Q}\subset\hat{Q_0}$ such that }
|u(x_{\hat{Q_0}})-u(x^{l}_{\hat{Q}})|>\varepsilon.
\end{align*}
Next, define
$$
G_2=\bigcup_{\hat{Q}\in G_1}G_1(\hat{Q}),
$$
where $G_1(\hat{Q})$ is defined the same way as $G_1$ with $\hat{Q_0}$ replaced with $\hat{Q}$.
Then define inductively families of sets $G_k$, for $k=2,\ldots$. Denote by $G=\bigcup_{k=0}G_k$.
Let us introduce a domain which roughly can be understood as follows: given any curved cube $\hat{Q}\in G$ consider its subset constructed by removing those maximal curved cubes $\hat{Q}_i$, where the jump of the values of $u$ at associated centers is big: $|u(x_{\hat{Q}})-u(x^{l}_{\hat{Q}_i})|>\varepsilon$, i.e. we define
\[
R(\hat{Q}):=\hat{Q}\setminus\bigcup_{\hat{Q}_i\in G_1(\hat{Q})}\hat{Q}_i,\quad \hbox{ for any }\hat{Q}\in G.
\]
Thus, set $R(\hat{Q})$ consists of all curved subcubes in $\hat{Q}$ with small oscillations of $u$, see Figure 3. We remark that this construction is similar to the one in Garnett's book, see the proof of Theorem 6.1 in~\cite[Section 6 in Ch. VIII]{ga}. 

Notice that given two different sets $\widehat{Q}, \widehat{W}\in G$, the corresponding domains $R(\widehat{Q})$ and $R(\widehat{W})$ can only intersect piecewise along boundaries, but their interiors are pairwise disjoint. Finally, we define blue and red sets, which are essential in our construction. Denote by $T(\hat{Q})$ the set $\hat{Q}$ translated vertically by $\frac{1}{2}l(Q)$:
\begin{equation}\label{def-TQ}
T(\hat{Q})=\{(x,y)\in\mathbb{R}^{n+1}:x\in Q,\phi(x)+\frac{1}{2}l(Q)\le y\le \phi(x)+\frac{3}{2}l(Q)\}.
\end{equation}
The key feature of sets $T(\hat{Q})$, to which we appeal several times below, is that they are separated from the graph of the Lipschitz function $\phi$, i.e. from the boundary of $\Om$. Moreover, an important feature of sets $T(\hat{Q})$ is that the associated center of $\hat{Q}$ is the center of an upper side of $T(\hat{Q})$, see Figure 2.

Sets $T(\hat{Q})$ are not disjoint. However, for a given set $\hat{Q}$ a set $T(\hat{Q})$ intersects only finitely many other sets of form $T(\hat{Q}_j)$. Moreover, the cardinality of a family of sets $\# \{j: T(\hat{Q})\cap T(\hat{Q}_j)\neq\emptyset\}$ is uniformly bounded for all choices of $\hat{Q}$.
When dealing with set $\hat{Q}_0$, we set $T(\hat{Q}_0):=\{(x,y)\in\mathbb{R}^{n+1}:x\in Q_0,\phi(x)+\frac{1}{2}l(Q_0)\le y\le \phi(x)+l(Q_0)\}$, i.e. its upper half.

Let $k>0$. We say that $T(\hat{Q})$ is \emph{blue}, if
\[
\osc_{T(\hat{Q})}u\le k\varepsilon.
\]
Otherwise, we say that $T(\hat{Q})$ is \emph{red}. 

\begin{proof}[Proof of Theorem~\ref{thm-main}]
First, we define an auxiliary function $\phi_1: \bigcup_{\hat{Q}_k\in G} R(\hat{Q}_k)\to \R$, which later on will be used to define the $\ep$-approximation of $u$, cf.~\eqref{def-eps-apr} 
\begin{equation}\label{def-phi1}
\varphi_1(z):=\sum_{j=1}^{\infty}\sum_{\hat{Q}_k\in G_j} u(x_{\hat{Q}_k})\chi_{R(\hat{Q}_k)}(z).
\end{equation}

Notice that, $\phi_1$ is in fact defined for all $z\in \hat{Q}$ and, moreover, for any $\hat{Q} \in G$ it holds that
\begin{equation}\label{ineq-phi1-Carl}
\int_{\hat{Q}}|\nabla\varphi_1| \,\ud \mathscr{L}^{n+1}\le\sum_{\hat{Q}_j\in G} |\hat{Q}\cap\partial R(\hat{Q}_j)|,
\end{equation}
where $|\cdot|$ denotes the $n$-Hausdorff measure. Here, the expression $|\nabla\varphi_1|$ is understood only in the distributional sense and the component functions of $\nabla\varphi_1$ are the signed measures supported on the appropriate faces in $\partial R(\hat{Q}_j)$, see the discussion for the upper-half space in $\R^2$ on pg. 345 in~\cite[Section 6, Ch. VIII]{ga}. Therefore, $|\chi_{R(\hat{Q}_k)}|$ in~\eqref{def-phi1} are the $n$-Hausdorff measures of $\hat{Q}\cap\partial R(\hat{Q}_j)$ and the above estimate is justified.

Our first step is to prove the following observation, which applied at~\eqref{ineq-phi1-Carl} shows that $|\nabla\varphi_1| \,\ud \mathscr{L}^{n+1}$ is a Carleson measure.

\begin{prop}\label{Claim1} For any $\hat{Q}$ it holds that
$\sum_{\hat{Q}_j\in G}|\hat{Q}\cap\partial R(\hat{Q}_j)|\le C\varepsilon^{-2}l(Q)^n$.
\end{prop}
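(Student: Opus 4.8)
\emph{Overall strategy.} By \eqref{ineq-phi1-Carl} the assertion is equivalent to the bound $\sum_{\hat Q_j\in G}|\hat Q\cap\partial R(\hat Q_j)|\lesssim\varepsilon^{-2}l(Q)^n$, and the first step is to replace this sum of $n$-dimensional measures of faces by a sum of cube side lengths. Recall that $\partial R(\hat Q_j)$ consists of $\partial\hat Q_j$ together with the boundaries of the maximal removed subcubes $\hat Q_i\in G_1(\hat Q_j)$; their bases are pairwise disjoint subcubes of $Q_j$, so $\sum_i|\partial\hat Q_i|\lesssim_{n,L}\sum_i l(Q_i)^n\le l(Q_j)^n$, whence $|\hat Q\cap\partial R(\hat Q_j)|\lesssim_{n,L}l(Q_j)^n$ whenever $\hat Q_j$ meets $\hat Q$. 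Splitting the sum according to whether $\hat Q_j\subseteq\hat Q$ or not, and using the elementary dyadic fact that a fixed cube $\hat Q$ can straddle a face of only $O_{n,L}(1)$ of its dyadic ancestors, one reduces everything to
\[
\sum_{\hat Q_j\in G,\ \hat Q_j\subseteq\hat Q}l(Q_j)^n\ \lesssim\ \varepsilon^{-2}\,l(Q)^n ,
\]
which is the content of Lemma~\ref{lem2}. Note that the naive bound (disjointness of bases at each fixed generation) only gives $\le l(Q)^n$ per generation, so the $\varepsilon^{-2}$ gain must come from the square function.

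\emph{From the stopping condition to the area function.} This is where the hypothesis \eqref{cond-main} is used, through the Morrey estimate \eqref{est-Morrey}. Fix a stopping cube $\hat Q_j\in G$ with stopping parent $\hat W\in G$, so that by construction $|u(x_{\hat W})-u(x^{l}_{\hat Q_j})|>\varepsilon$. The key geometric point — which makes \eqref{cond-star}, requiring $2B_r\subset\Om$, applicable — is that the distinguished points and the chaining between them take place inside sets of the form $T(\hat Q)$, which are separated from $\partial\Om$ by a definite fraction of their side length: one joins $x_{\hat W}$ to $x^{l}_{\hat Q_j}$ by a chain of $O_L(1)$ balls $B$ of radius $c(L)\,l(Q_j)$ with $2B\subset\Om$, with $B$ at height $\gtrsim_L l(Q_j)$ above the graph of $\phi$ and contained in a bounded dilate of $\hat Q_j$. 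Subadditivity of the oscillation along the chain yields $\osc_B(u)\gtrsim_L\varepsilon$ for at least one such ball, and then \eqref{est-Morrey} gives $\varepsilon^2\,l(Q_j)^{n-1}\lesssim_{n,L,\theta}\int_{(1+\eta)B}|\nabla u|^2\,\ud\mathscr L^{n+1}$, the region $W_j:=(1+\eta)B$ lying inside a fixed dilate of $T(\hat Q_j)$ at height $\sim_L l(Q_j)$. A routine Fubini computation converts this into a lower bound for a truncated area function: for every $(z,y)\in W_j$ one has $(z,y)\in\Gamma_{\alpha}(x)$ for $x$ in a set of measure $\sim l(Q_j)^n$ on which $(y-\phi(x))^{1-n}\sim_L l(Q_j)^{1-n}$, so choosing a height window $[a_j,b_j]$ with $a_j\sim l(Q_j)$ and $b_j/a_j\lesssim_L 1$ such that $W_j\subset\Gamma_{\alpha,a_j,b_j}(x)$ for $x$ in a dilate $E_j$ of $Q_j$, one obtains
\[
\int_{E_j}\big(A_{\alpha,a_j,b_j}u\big)(x)^2\,\ud x\ \gtrsim_{n,L,\theta,\alpha}\ \varepsilon^2\,l(Q_j)^n .
\]
This is Lemma~\ref{lem1}, a lower bound for the area function.

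\emph{Summation and conclusion.} It remains to add these lower bounds over $j$ and compare with the $L^2$ bound for the area function. For a fixed $x$, the stopping cubes $\hat Q_j\subseteq\hat Q$ with $x\in E_j$ form a nested chain (their bases are nested dyadic cubes), and the corresponding height windows $[a_j,b_j]$, being of the form $[c_1l(Q_j),c_2l(Q_j)]$ with $0<c_1<c_2$ absolute and the $l(Q_j)$ pairwise distinct, have bounded overlap; hence $\sum_j\chi_{E_j}(x)\big(A_{\alpha,a_j,b_j}u\big)(x)^2\lesssim_L\big(A_{\alpha,0,C_Ll(Q)}u\big)(x)^2$. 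Integrating in $x$ over a bounded dilate $C'Q$ of $Q$ and invoking Proposition~\ref{prop24} — which, together with $N_\beta u\le\|u\|_\infty<\infty$, gives $\int_{C'Q}\big(A_{\alpha,0,C_Ll(Q)}u\big)^2\lesssim\|u\|_\infty^2\,l(Q)^n$ — we get $\varepsilon^2\sum_{\hat Q_j\subseteq\hat Q}l(Q_j)^n\lesssim\|u\|_\infty^2\,l(Q)^n$. Together with the reduction of the first paragraph this proves Proposition~\ref{Claim1}, with $C=C(n,L,\theta,\alpha,\beta,\|u\|_\infty)$; in particular $C$ is uniform once $\|u\|_\infty\le 1$, as needed for Corollary~\ref{cor-qft}.

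\emph{Main obstacle.} The delicate part is Lemma~\ref{lem1}: one has to turn a single pointwise stopping inequality into a lower bound for the area function at the correct scale, which forces one to (i) carry out the chaining entirely inside the boundary-separated sets $T(\hat Q)$ so that \eqref{cond-star}/\eqref{est-Morrey} may legitimately be applied — and it is precisely here, rather than in any use of a mean value property, that membership in the class \eqref{cond-main} is exploited — and (ii) keep careful track of scales, so that the truncated area functions attached to different generations of stopping cubes above a given point do not overlap too much and sum up to the full truncated area function. The remaining genuinely analytic input, the $L^2$ bound for the area function on a cube (Proposition~\ref{prop24}), is used here as a black box; it is established separately in the Appendix and is essentially a localized form of the estimate $A_\alpha u\lesssim N_\beta u$ from \cite{g}.
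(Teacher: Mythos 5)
Your overall architecture matches the paper's: reduce the sum of face measures to $\sum_{\hat Q_j\in G,\,\hat Q_j\subset\hat Q}l(Q_j)^n$ (the cubes not contained in $\hat Q$ contributing only $C(n)l(Q)^n$), prove a per-cube lower bound $\varepsilon^2 l(Q_j)^n\lesssim\int|\nabla u|^2(y-\phi(x))\,\ud x\,\ud y$ over a region localized near $\hat Q_j$ with bounded overlap, and close with the $L^2$ area-function bound of Proposition~\ref{prop24}. The reduction, the summation over generations and the role of Proposition~\ref{prop24} are essentially as in Lemmas~\ref{lem1} and~\ref{lem2} of the paper.

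The gap is in your substitute for Lemma~\ref{lem1}. You propose to join $x_{\hat W}$ to $x^{l}_{\hat Q_j}$ by a chain of $O_L(1)$ balls of radius $c(L)\,l(Q_j)$ with $2B\subset\Om$. But a stopping cube $\hat Q_j\in G_1(\hat W)$ can be an arbitrarily deep descendant of $\hat W$: the two points sit at heights $\sim l(W)$ and $\sim l(Q_j)$ above the graph and at distance up to $\sim l(W)$, so any admissible chain has length $\gtrsim\log\bigl(l(W)/l(Q_j)\bigr)$, with ball radii growing from $\sim l(Q_j)$ to $\sim l(W)$ — not $O(1)$ balls of radius $\sim l(Q_j)$. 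Consequently (i) the ball on which $\osc_B u\gtrsim\varepsilon$ may be a large ball near $x_{\hat W}$, far from $\hat Q_j$ and at the wrong scale, so you do not obtain $\varepsilon^2 l(Q_j)^{n-1}\lesssim\int_{(1+\eta)B}|\nabla u|^2$ with $(1+\eta)B$ inside a dilate of $T(\hat Q_j)$; and (ii) that same region near $x_{\hat W}$ would be charged by every one of the possibly infinitely many stopping children of $\hat W$, which destroys the bounded-overlap property your summation step needs. The paper avoids exactly this via the red/blue dichotomy for $T(\hat Q_j)$: if $T(\hat Q_j)$ is red, $\osc_{T(\hat Q_j)}u>k\varepsilon$ and \eqref{est-Morrey} applies directly on $T(\hat Q_j)$; if $T(\hat Q_j)$ is blue, the stopping inequality together with the small oscillation on $T(\hat Q_j)$ forces $|u-u(x_{\hat Q})|\ge(1-k)\varepsilon$ at a point of $\Gamma_{\alpha,0,\frac12 l(Q_j)}(X)$ for every $X$ in the upper deck $U\hat Q_j$, and then the truncated $N\lesssim A$ inequality of Theorem 1.1(b) in \cite{g} — which is where the hypothesis \eqref{cond-main} with $\theta<1$ is genuinely used, not merely through \eqref{est-Morrey} — converts this lower bound for $N_{\alpha,0,\frac12 l(Q_j)}(u-u(x_{\hat Q}))$ on a set of measure $\sim l(Q_j)^n$ into the desired area-function lower bound on a union of truncated cones of height $\sim l(Q_j)$. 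Without this dichotomy (or some substitute handling the case where $u$ barely oscillates near $\hat Q_j$ even though the stopping condition fired at $\hat W$), your version of Lemma~\ref{lem1} does not go through.
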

\begin{proof} We may assume, without loss of the generality, that $\hat{Q}\in G$. For otherwise, we consider a family $M(\hat{Q})$ of cubes such that $\hat{Q}_1\in M(\hat{Q})$ if $\hat{Q}_1\subset\hat{Q}$, $\hat{Q}_1\in G$ and $\hat{Q}_1$ is maximal. Then it suffices to prove the assertion for each of the cubes in $M(\hat{Q})$. Hence, from now on we assume that $\hat{Q}\in G$. In order to show the assertion of Proposition~\ref{Claim1} we consider two cases depending whether $\hat{Q}_j$ is contained in $\hat{Q}$ or not and then prove two auxiliary observations in Lemmas~\ref{lem1} and~\ref{lem2}.  
\smallskip

\noindent {\sc Case 1:} $\hat{Q}_j$ is such that $\hat{Q}\cap\partial R(\hat{Q}_j)\neq\emptyset$ and $\hat{Q}_j\not\subset\hat{Q}$. 
	\begin{enumerate}
	\item[(1.1)] Let $l(Q_j)\le l(Q)$. Then, it holds that ${\rm int }\hat{Q}\cap {\rm int }\hat{Q}_j=\emptyset$, but the boundaries of curved cubes $\hat{Q}$ and $\hat{Q}_j$ still intersect.

It holds that $\hat{Q}\cap\partial R(\hat{Q}_j)$ is a subset of the vertical faces of $\hat{Q}$ (throughout the paper, by vertical faces we mean those different from the bottom and the top deck of a cube/curved cube). It is the case, since: (1) $\hat{Q}_j$ has to touch $\hat{Q}$, as otherwise $\hat{Q}\cap\partial R(\hat{Q}_j)=\emptyset$ and such a curved cube does not contribute to the sum $\sum_{\hat{Q}_j\in G}|\hat{Q}\cap\partial R(\hat{Q}_j)|$; (2) since $l(Q_j)\le l(Q)$, only vertical sides can touch.

For different curved cubes $\hat{Q}_j$ satisfying $l(Q_j)\le l(Q)$, the corresponding sets $\hat{Q}\cap\partial R(\hat{Q}_j)$ can intersect along a set of positive $(n-1)$-Hausdorff measure only, due to the definition of $G$ and $R(\hat{Q}_j)$. Indeed, let $\hat{Q}_l\not=\hat{Q}_k$ be such cubes. Then we have three cases: 

\noindent
(a) cubes $\hat{Q}_l$ and $\hat{Q}_k$ have no common face and ${\rm int }\hat{Q}_l \cap {\rm int }\hat{Q}_k=\emptyset$ in which case the corresponding sets $\hat{Q}\cap\partial R(\hat{Q}_k)$ and $\hat{Q}\cap\partial R(\hat{Q}_l)$ can intersect along a set of positive $(n-1)$-Hausdorff measure only. See Figure 4.

\noindent
(b) cubes $\hat{Q}_l$ and $\hat{Q}_k$ have a common face and ${\rm int }\hat{Q}_l \cap {\rm int }\hat{Q}_k=\emptyset$. Then sets $\hat{Q}\cap\partial R(\hat{Q}_k)$ and $\hat{Q}\cap\partial R(\hat{Q}_l)$ are subsets of a common face of $\hat{Q}$, which can only intersect along an $(n-1)$ dimensional set $\partial\hat{Q}\cap\partial\hat{Q}_k\cap\partial\hat{Q}_l$.

\noindent
(c) interiors of cubes $\hat{Q}_j$ and $\hat{Q}_k$ intersect, but this means that one of the cubes contains another, for instance let $\hat{Q}_j\subset \hat{Q}_k$. However, then $\hat{Q}_j\cap R(\hat{Q}_k)=\emptyset$ and so the conclusion is as in case (a) above.

 Therefore, all such $\hat{Q}_j$ amount to at most $C(n)l(Q)^n$ in $\sum_{\hat{Q}_j\in G}|\hat{Q}\cap\partial R(\hat{Q}_j)|$, as they cover at most all vertical faces of $\hat{Q}$.  

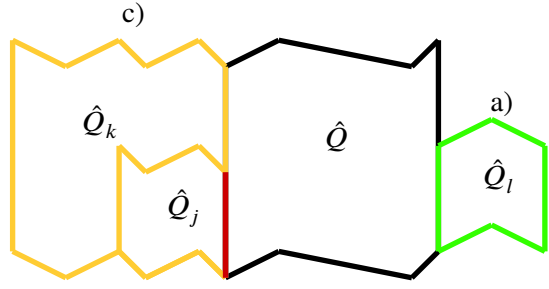
\begin{figure}
    \centering
  \definecolor{ccqqqq}{rgb}{0.8,0,0}
\definecolor{qqqqff}{rgb}{0,0,1}
\definecolor{ffcctt}{rgb}{1,0.8,0.2}
\definecolor{ttffqq}{rgb}{0.2,1,0}

\begin{tikzpicture}[scale=0.35][line cap=round,line join=round,>=triangle 45,x=1cm,y=1cm]
\clip(-9.032531984347367,-0.0269002934497455) rectangle (12.456791632011594,11.41300342670833);
\draw [line width=2pt] (0,1)-- (2,2);
\draw [line width=2pt] (2,2)-- (7,1);
\draw [line width=2pt] (7,1)-- (8,2);
\draw [line width=2pt] (0,1)-- (0,9);
\draw [line width=2pt] (0,9)-- (2,10);
\draw [line width=2pt] (2,10)-- (7,9);
\draw [line width=2pt] (7,9)-- (8,10);
\draw [line width=2pt] (8,10)-- (8,2);
\draw [line width=2pt,color=ttffqq] (8,2)-- (10,3);
\draw [line width=2pt,color=ttffqq] (10,3)-- (12,2);
\draw [line width=2pt,color=ttffqq] (12,2)-- (12,6);
\draw [line width=2pt,color=ttffqq] (12,6)-- (10,7);
\draw [line width=2pt,color=ttffqq] (10,7)-- (8,6);
\draw [line width=2pt,color=ttffqq] (8,6)-- (8,2);
\draw [line width=2pt,color=ffcctt] (0,1)-- (-1,2);
\draw [line width=2pt,color=ffcctt] (-1,2)-- (-3,1);
\draw [line width=2pt,color=ffcctt] (-3,1)-- (-4,2);
\draw [line width=2pt,color=ffcctt] (-4,2)-- (-6,1);
\draw [line width=2pt,color=ffcctt] (-6,1)-- (-8,2);
\draw [line width=2pt,color=ffcctt] (-8,2)-- (-8,10);
\draw [line width=2pt,color=ffcctt] (-8,10)-- (-6,9);
\draw [line width=2pt,color=ffcctt] (-6,9)-- (-4,10);
\draw [line width=2pt,color=ffcctt] (-4,10)-- (-3,9);
\draw [line width=2pt,color=ffcctt] (-3,9)-- (-1,10);
\draw [line width=2pt,color=ffcctt] (-1,10)-- (0,9);
\draw [line width=2pt,color=ffcctt] (0,9)-- (0,1);
\draw [line width=2pt,color=qqqqff] (0,5)-- (0,1);
\draw [line width=2pt,color=ffcctt] (-4,6)-- (-3,5);
\draw [line width=2pt,color=ffcctt] (-3,5)-- (-1,6);
\draw [line width=2pt,color=ffcctt] (-1,6)-- (0,5);
\draw [line width=2pt,color=ffcctt] (0,5)-- (0,1);
\draw [line width=2pt,color=ffcctt] (0,1)-- (-1,2);
\draw [line width=2pt,color=ffcctt] (-1,2)-- (-3,1);
\draw [line width=2pt,color=ffcctt] (-3,1)-- (-4,2);
\draw [line width=2pt,color=ffcctt] (-4,2)-- (-4,6);
\draw [line width=2pt,color=ccqqqq] (0,5)-- (0,1);
\draw (3.3718743307866665,7.323503923905612) node[anchor=north west] {$\hat{Q}$};
\draw (-5.75672045746338,7.934988742257292) node[anchor=north west] {$\hat{Q}_k$};
\draw (-2.568263904629631,4.746532189423532) node[anchor=north west] {$\hat{Q}_j$};
\draw (9.312012566202965,5.882146852076652) node[anchor=north west] {$\hat{Q}_l$};
\draw (-4.271685898609305,11.82228508749297) node[anchor=north west] {c)};
\draw (9.574077488353684,8.459118586558732) node[anchor=north west] {a)};
\end{tikzpicture}
    \caption{\small This figure illustrates a) and c) in Case 1.1 in Proposition \ref{Claim1}. Since b) may only be observed if the dimension is greater than two, it is not shown as a figure. Red line is a set $\partial\hat{Q}\cap\partial\hat{Q}_j$. A set $\partial R(\hat{Q}_j)\cap\hat{Q}$ is a subset of a red set, whereas a set $\partial R(\hat{Q}_k)\cap\hat{Q}$ is contained in a yellow line above a red one. Therefore these sets may only intersect along a set of dimension $n-1$.}
    \label{fig:enter-label}
\end{figure}

\item[(1.2)] Let $l(Q_j)>l(Q)$.

Then, there are at most $C(n)$ of such cubes $\hat{Q}_j$. In order to see that this holds, let us consider two cases. If $\hat{Q}\not\subset\hat{Q}_j$, then there cannot be more of such $\hat{Q}_j$ than faces of $\hat{Q}$. This is a consequence of the following observations: (1) $\hat{Q}\cap\partial R(\hat{Q}_j)\neq\emptyset$ by assumptions, and so $\hat{Q}$ and $\hat{Q}_j$ have to touch; (2) since $\hat{Q}_j\in G$ and $l(Q_j)>l(Q)$, then for each face $F$ of $\hat{Q}$ there is at most one curved cube in $G$ such that it touches $F$ with the face of side length bigger than $l(Q)$ and, moreover, $\hat{Q}\cap\partial R(\hat{Q}_j)\neq\emptyset$ (see also Figure 5).

Let now $\hat{Q}\subset\hat{Q}_j$, then there exists exactly one cube in family $G$ such that   $\hat{Q}\cap\partial R(\hat{Q}_j)\neq\emptyset$. To prove it, note that for any bigger cube $\hat{Q}_k\in G$ with $\hat{Q}\subset\hat{Q}_j\subset\hat{Q}_k$ it holds that $\hat{Q}\cap\partial R(\hat{Q}_k)=\emptyset$, as for such $\hat{Q}_k$, the cube $\hat{Q}_j$ is not contained in $R(\hat{Q}_k)$, as it had to be removed in the construction of $R(\hat{Q}_k)$. Therefore, there is only one cube such that $\hat{Q}\subset\hat{Q}_j$ and $\hat{Q}\cap\partial R(\hat{Q}_j)\neq\emptyset$.

Thus, similarly to case (1.1), such cubes contribute at most $C(n)l(Q)^n$ to the sum $\sum_{\hat{Q}_j\in G}|\hat{Q}\cap\partial R(\hat{Q}_j)|$.  
\end{enumerate}
In summary, the discussion in cases (1.1) and (1.2) gives that
 \begin{equation}\label{claim-case1}
 \sum_{\hat{Q}_j\in G, \hat{Q}_j\not \subset\hat{Q}}|\hat{Q}\cap\partial R(\hat{Q}_j)|\le C(n)l(Q)^n.
 \end{equation}

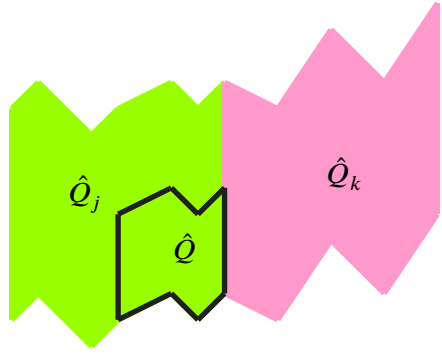
\begin{figure}
    \centering
    \definecolor{sqsqsq}{rgb}{0.12549019607843137,0.12549019607843137,0.12549019607843137}
\definecolor{ffzzcc}{rgb}{1,0.6,0.8}
\definecolor{zzffqq}{rgb}{0.6,1,0}
\begin{tikzpicture}[scale=0.35][line cap=round,line join=round,>=triangle 45,x=1cm,y=1cm]
\clip(-3.5254794412759756,-0.4768734734871145) rectangle (18.516666293012918,13.386198196913542);
\fill[line width=2pt,color=zzffqq,fill=zzffqq,fill opacity=0.1] (0,1) -- (1,2) -- (3,0) -- (4,1) -- (6,2) -- (7,1) -- (8,2) -- (8,10) -- (7,9) -- (6,10) -- (4,9) -- (3,8) -- (1,10) -- (0,9) -- cycle;
\fill[line width=2pt,color=ffzzcc,fill=ffzzcc,fill opacity=0.1] (8,2) -- (10,1) -- (12,4) -- (14,2) -- (16,5) -- (16,13) -- (14,10) -- (12,12) -- (10,9) -- (8,10) -- cycle;
\draw [line width=2pt,color=zzffqq] (0,1)-- (1,2);
\draw [line width=2pt,color=zzffqq] (1,2)-- (3,0);
\draw [line width=2pt,color=zzffqq] (3,0)-- (4,1);
\draw [line width=2pt,color=zzffqq] (4,1)-- (6,2);
\draw [line width=2pt,color=zzffqq] (6,2)-- (7,1);
\draw [line width=2pt,color=zzffqq] (7,1)-- (8,2);
\draw [line width=2pt,color=zzffqq] (8,2)-- (8,10);
\draw [line width=2pt,color=zzffqq] (8,10)-- (7,9);
\draw [line width=2pt,color=zzffqq] (7,9)-- (6,10);
\draw [line width=2pt,color=zzffqq] (6,10)-- (4,9);
\draw [line width=2pt,color=zzffqq] (4,9)-- (3,8);
\draw [line width=2pt,color=zzffqq] (3,8)-- (1,10);
\draw [line width=2pt,color=zzffqq] (1,10)-- (0,9);
\draw [line width=2pt,color=zzffqq] (0,9)-- (0,1);
\draw [line width=2pt,color=ffzzcc] (8,2)-- (10,1);
\draw [line width=2pt,color=ffzzcc] (10,1)-- (12,4);
\draw [line width=2pt,color=ffzzcc] (12,4)-- (14,2);
\draw [line width=2pt,color=ffzzcc] (14,2)-- (16,5);
\draw [line width=2pt,color=ffzzcc] (16,5)-- (16,13);
\draw [line width=2pt,color=ffzzcc] (16,13)-- (14,10);
\draw [line width=2pt,color=ffzzcc] (14,10)-- (12,12);
\draw [line width=2pt,color=ffzzcc] (12,12)-- (10,9);
\draw [line width=2pt,color=ffzzcc] (10,9)-- (8,10);
\draw [line width=2pt,color=ffzzcc] (8,10)-- (8,2);
\draw [line width=2pt,color=sqsqsq] (4,1)-- (6,2);
\draw [line width=2pt,color=sqsqsq] (6,2)-- (7,1);
\draw [line width=2pt,color=sqsqsq] (7,1)-- (8,2);
\draw [line width=2pt,color=sqsqsq] (8,2)-- (8,6);
\draw [line width=2pt,color=sqsqsq] (8,6)-- (7,5);
\draw [line width=2pt,color=sqsqsq] (7,5)-- (6,6);
\draw [line width=2pt,color=sqsqsq] (6,6)-- (4,5);
\draw [line width=2pt,color=sqsqsq] (4,5)-- (4,1);
\draw (1.7594419894475388,6.867739311865788) node[anchor=north west] {$\hat{Q}_j$};
\draw (11.460054352420306,7.563123710645196) node[anchor=north west] {$\hat{Q}_k$};
\draw (5.688363842551204,4.677278455710651) node[anchor=north west] {$\hat{Q}$};
\end{tikzpicture}
    \caption{\small This figure shows Case 1.2 in Proposition \ref{Claim1}. The purple cube refers to the case $\hat{Q}\not\subset\hat{Q}_j$ and the green one refers to the case $\hat{Q}\subset\hat{Q}_j$.}
    \label{fig:enter-label}
\end{figure}

\noindent {\sc Case 2:} $\hat{Q}_j$ is such that $\hat{Q}\cap\partial R(\hat{Q}_j)\neq\emptyset$ and $\hat{Q}_j\subset\hat{Q}$.

Then, trivially we have that 
\begin{equation}\label{claim-case2}
\sum_{\hat{Q}_j\in G, \hat{Q}_j\subset\hat{Q}}|\hat{Q}\cap\partial R(\hat{Q}_j)|\le C(n)\sum_{\hat{Q}_j\in G,\hat{Q}_j\subset\hat{Q}}l(Q_j)^n.
\end{equation}

To continue the proof of Proposition \ref{Claim1} let us prove the following observation.
\begin{lem}\label{lem1}
Let $\hat{Q}\in G$. It holds that 
\[
\sum_{\hat{Q}_j\in G_1(\hat{Q})}l(Q_j)^n\le C\varepsilon^{-2}\int_{\widetilde{R(\hat{Q})}}|\nabla u(x,y)|^2(y-\phi(x))\,\ud x \ud y,  
\]
where $C=C(n, L, \theta, \eta)$ and the set $\widetilde{R(\hat{Q})}$ is defined as follows:
 \begin{equation}\label{def-R-tilde}
 \widetilde{R(\hat{Q})}:=\bigcup_{\hat{Q}_j\in G_1(\hat{Q})}\widetilde{\hat{Q_j}}\quad\hbox{ where }\quad
 \widetilde{\hat{Q_j}}:=
 \begin{cases}
 T(\hat{Q}_j),\quad \hbox{ if } T(\hat{Q}_j) \hbox{ is red }\\
 \bigcup_{X\in U\hat{Q}_j}\Gamma_{\alpha,0,\frac{1}{2}l(Q_j)}(X),\quad \hbox{ if }T(\hat{Q}_j) \hbox{ is blue}.
 \end{cases}
 \end{equation}
 By $U\hat{Q}_j$, we denote the upper deck of $\hat{Q}_j$. (We refer to the discussion in the proof below, see~\eqref {def-R-tilde2}, where the set $\widetilde{R(\hat{Q})}$ is constructed and its meaning explained).
\end{lem}
\begin{proof}
Let $\hat{Q}_j\in G_1(\hat{Q})$. 
\smallskip

\noindent {\sc Case 1:} The translated curved cube $T(\hat{Q}_j)$ is red (cf.~\eqref{def-TQ} for the definition of  $T(\hat{Q}_j)$). 
Then, it follows by~\eqref{est-Morrey} and~\eqref{ball-box} that
\begin{align*}
k^2\varepsilon^2\leq (\osc_{T(\hat{Q}_j)} u)^2\lesssim_{n, L, \theta, \eta} l(Q_j)^{1-n}\int_{T(\hat{Q}_j)}|\nabla u|^2,
\end{align*}
for some $k>0$ whose exact value will be determined later in this proof. Hence, since $T(\hat{Q}_j)\cap \partial \Om=\emptyset$ we have that $y-\phi(x)\approx_{n, L} l(Q_j)$ for all $(x,y)\in T(\hat{Q}_j)$. Thus, we get
\begin{equation}\label{est-case1}
k^2l(Q_j)^n\lesssim_{n, L, \theta, \eta}\varepsilon^{-2}\int_{T(\hat{Q}_j)}|\nabla u(x,y)|^2(y-\phi(x))\,\ud x \ud y.
\end{equation}
\noindent {\sc Case 2:} Set $T(\hat{Q}_j)$ is blue.

Since $\hat{Q}_j\in G_1(\hat{Q})$, we know that $|u(x^{l}_{\hat{Q}_j})-u(x_{\hat{Q}})|>\varepsilon$. Next, let us define the point 
\[
x^{\frac12 l}_{\hat{Q}_j}:= x^{\hat{Q}_j}+\frac12l(Q_j)\overline{e_{n+1}},
\]
 which has the same $x$ coordinate as the center of the curved cube $x_{\hat{Q}_j}$ but its $y$ coordinate equals $\phi(x)+l(Q_j)$. Thus, one can think that such point is a vertical projection of the center of the cube $\hat{Q}_j$ on the upper deck of $\hat{Q}_j$, denoted by $U\hat{Q}_j$. However, notice that $x^{\frac12 l}_{\hat{Q}_j}$ does not lie in the boundary $\partial \Om$ while we would like to consider a cone with the vertex at that point. Therefore, we let $\Omega_j=\Omega+\overline{e_{n+1}}l(Q_j)$ be a subdomain of $\Om$ obtained by shifting $\Om$ vertically up by $l(Q_j)$. Now $x^{\frac12 l}_{\hat{Q}_j} \in\partial\Omega_j$.

Therefore, we have 
\begin{equation}\label{est-nontm}
N_{\alpha,0,\frac{1}{2}l(Q_j)}(u-u(x_{\hat{Q}}))(x^{\frac12 l}_{\hat{Q}_j})>\varepsilon,
\end{equation}
 where the (truncated) non-tangential maximal function $N$ is considered with respect to domain $\Om_j$.

 We now show that estimate~\eqref{est-nontm} holds not only at $x^{\frac12 l}_{\hat{Q}_j}$, the center of the upper deck of $\hat{Q}_j$, but in fact  at its all points $X$, i.e. 
\[
 N_{\alpha,0,\frac{1}{2}l(Q_j)}(u-u(P_{\hat{Q}}))(X)\gtrsim\epsilon. 
\]
Let us consider vertical shifts of points $X\in U\hat{Q}_j$ so that they belong to $T(\hat{Q}_j)\setminus \hat{Q}_j$, e.g. $X+\tfrac14\overline{e_{n+1}}l(Q_j)$ and notice that they satisfy
\[
X+\tfrac14\overline{e_{n+1}}l(Q_j) \in\Gamma_{\alpha}(X) \quad\hbox{ and }\quad X+\tfrac14\overline{e_{n+1}}l(Q_j) \in T(\hat{Q}_j).
\]
As a consequence we get, by the triangle inequality and since $T(\hat{Q}_j)$ is blue, that
\begin{align*}
\varepsilon&<|u(x^{l}_{\hat{Q}})-u(x_{\hat{Q}})|\le |u(x^{l}_{\hat{Q}})-u(X+\tfrac14\overline{e_{n+1}}l(Q_j))|+|u(X+\tfrac14\overline{e_{n+1}}l(Q_j))-u(x_{\hat{Q}})|\\
&\le k\varepsilon+|u(X+\tfrac14\overline{e_{n+1}}l(Q_j))-u(x_{\hat{Q}})|
\end{align*}
and hence
\begin{equation}\label{aux3-lem2}
|u(X+\tfrac14\overline{e_{n+1}}l(Q_j))-u(x_{\hat{Q}})|>(1-k)\varepsilon.
\end{equation}
Therefore, for every $X\in U\hat{Q}_j$, we obtain the following estimate
\[
(1-k)\varepsilon \leq N_{\alpha,0,\frac{1}{2}l(Q_j)}(u-u(x_{\hat{Q}}))(X).
\]
Hence, for any $X\in U\hat{Q}_j$
\begin{align}
(1-k)^2\varepsilon^2 l(Q_j)^n&\lesssim_{n, L} (N_{\alpha,0,\frac{1}{2}l(Q_j)}(u-u(x_{\hat{Q}})))^2(X) \int_{U\hat{Q}_j} \ud \mathscr{H}^n \nonumber\\
&\le \int_{U\hat{Q}_j} (N_{\alpha,0,\frac{1}{2}l(Q_j)}(u-u(x_{\hat{Q}})))^2(X)\, \ud \mathscr{H}^n\nonumber\\
&\lesssim \int_{U\hat{Q}_j} \int_{\Gamma_{\alpha,0, 1/2 l(Q_j)}(X)} |\nabla u|^2(y-\phi(x)-l(Q_j))^{1-n}\,\ud x\ud y \tag{$N\lesssim A$}\nonumber\\
&\lesssim \int_{\bigcup \limits_{X\in U\hat{Q}_j}\Gamma_{\alpha,0, 1/2l(Q_j)}(X)}|\nabla u|^2(y-\phi(x)-l(Q_j))\,\ud x\ud y \tag{Fubini's Theorem}\nonumber\\
&\le \int_{\bigcup \limits_{X\in U\hat{Q}_j}\Gamma_{\alpha,0, 1/2l(Q_j)}(X)}|\nabla u|^2(y-\phi(x))\,\ud x\ud y, 
\label{est-case2}
\end{align}
where the third ($N\lesssim A$) inequality follows by the fact that $U\hat{Q}_j\subset \partial \Omega_j$ and by applying the local version of Theorem 1.1 (b) for $p=2$ in~\cite{g} allowing us the consider the truncated versions of the $N_{\alpha}$ and the $A_{\alpha}$ functions, see the comment following the statement of Theorem 1.2 in~\cite{g}.
  
 Notice that the set $\bigcup_{X\in U\hat{Q}_j}\Gamma_{\alpha,0,\frac{1}{2}l(Q_j)}(X)$ consists of the upper-half of $T(\hat{Q}_j)$ and additional parts belonging to neighbouring curved cubes.
 
\begin{figure}[!tbp]
  \centering
  \begin{minipage}[b]{0.45\textwidth}
    \definecolor{qqqqff}{rgb}{0,0,1}
\definecolor{ffqqqq}{rgb}{1,0,0}
\begin{tikzpicture}[scale=0.6][line cap=round,line join=round,>=triangle 45,x=1cm,y=1cm]
\clip(-0.0783342931457541,-0.020217484616638) rectangle (14.770348486221739,11.014420765662203); rectangle (14.770348486221739,11.014420765662203);
\fill[line width=1pt,dotted,color=ffqqqq,fill=ffqqqq,fill opacity=0.12] (1,3) -- (3,4) -- (5,3) -- (5,7) -- (3,8) -- (1,7) -- cycle;
\fill[line width=1pt,dotted,color=qqqqff,fill=qqqqff,fill opacity=0.24] (8,5) -- (10,6) -- (12,5) -- (12,7) -- (10,8) -- (8,7) -- cycle;
\fill[line width=1pt,dotted,color=qqqqff,fill=qqqqff,fill opacity=0.21] (12,7) -- (12.786532022402879,6.6833207515257245) -- (12,5) -- cycle;
\draw [line width=1pt] (1,1)-- (3,2);
\draw [line width=1pt] (3,2)-- (5,1);
\draw [line width=1pt] (5,1)-- (5,5);
\draw [line width=1pt] (5,5)-- (3,6);
\draw [line width=1pt] (3,6)-- (1,5);
\draw [line width=1pt] (1,5)-- (1,1);
\draw [line width=1pt,dotted,color=ffqqqq] (1,3)-- (3,4);
\draw [line width=1pt,dotted,color=ffqqqq] (3,4)-- (5,3);
\draw [line width=1pt,dotted,color=ffqqqq] (5,3)-- (5,7);
\draw [line width=1pt,dotted,color=ffqqqq] (5,7)-- (3,8);
\draw [line width=1pt,dotted,color=ffqqqq] (3,8)-- (1,7);
\draw [line width=1pt,dotted,color=ffqqqq] (1,7)-- (1,3);
\draw [line width=1pt] (8,1)-- (10,2);
\draw [line width=1pt] (10,2)-- (12,1);
\draw [line width=1pt] (12,1)-- (12,5);
\draw [line width=1pt] (12,5)-- (10,6);
\draw [line width=1pt] (10,6)-- (8,5);
\draw [line width=1pt] (8,5)-- (8,1);
\draw [shift={(8,5)},line width=1pt,dotted,color=qqqqff,fill=qqqqff,fill opacity=0.22]  (0,0) --  plot[domain=1.5707963267948966:2.007128639793479,variable=\t]({1*2*cos(\t r)+0*2*sin(\t r)},{0*2*cos(\t r)+1*2*sin(\t r)}) -- cycle ;
\draw [line width=1pt,dotted,color=qqqqff] (8,5)-- (10,6);
\draw [line width=1pt,dotted,color=qqqqff] (10,6)-- (12,5);
\draw [line width=1pt,dotted,color=qqqqff] (12,7)-- (10,8);
\draw [line width=1pt,dotted,color=qqqqff] (10,8)-- (8,7);
\draw (2.5806882094621693,7.592020479986081) node[anchor=north west] {$\widetilde{\hat{Q}_k}$};
\draw (9.594496202329028,7.739901973811592) node[anchor=north west] {$\widetilde{\hat{Q}_j}$};
\draw [line width=1pt] (8,1)-- (7,1.5);
\draw [line width=1pt] (11.999086218827804,1.000456890586098)-- (13,0.6);
\draw [line width=1pt,dotted,color=qqqqff] (12,7)-- (12.786532022402879,6.6833207515257245);
\draw [line width=1pt,dotted,color=qqqqff] (12.786532022402879,6.6833207515257245)-- (12,5);
\draw [line width=1pt,dotted,color=qqqqff] (12,5)-- (12,7);
\end{tikzpicture}
    \caption{\small This figure shows how sets $\widetilde{\hat{Q}_k}$ and $\widetilde{\hat{Q}_j}$ look like for $T(\hat{Q}_k)$ red and $T(\hat{Q}_j)$ blue, respectively. Notice that for a blue $T(\hat{Q_j})$ we drew a bit more of a graph of $\phi$ as a blue set is a union of truncated cones and the way in which the cone is truncated depends on $\phi$.}
  \end{minipage}
  \hfill
  \begin{minipage}[b]{0.4\textwidth}
    \definecolor{qqqqff}{rgb}{0,0,1}
\definecolor{ffqqqq}{rgb}{1,0,0}
\begin{tikzpicture}[scale=0.3][line cap=round,line join=round,>=triangle 45,x=1cm,y=1cm]
\clip(-1.7595675301932747,0.33314910893249794) rectangle (20.170896014719542,21.5679332966804);
\fill[line width=1pt,color=ffqqqq,fill=ffqqqq,fill opacity=0.54] (1,5) -- (5,7) -- (6,6) -- (9,8) -- (9,16) -- (6,14) -- (5,15) -- (1,13) -- cycle;
\fill[line width=1pt,color=qqqqff,fill=qqqqff,fill opacity=0.5] (9,10) -- (9,8) -- (10,7) -- (13,8) -- (13,10) -- (10,9) -- cycle;
\fill[line width=1pt,color=qqqqff,fill=qqqqff,fill opacity=0.54] (13,6) -- (15,4) -- (15,5) -- (13,7) -- cycle;
\fill[line width=1pt,color=ffqqqq,fill=ffqqqq,fill opacity=0.56] (17,2) -- (16,2) -- (15,3) -- (15,5) -- (16,4) -- (17,4) -- cycle;
\fill[line width=1pt,color=qqqqff,fill=qqqqff,fill opacity=0.51] (13,8) -- (13,10) -- (13.636029765969887,9.363970234030113) -- cycle;
\draw [line width=1pt] (1,1)-- (5,3);
\draw [line width=1pt] (5,3)-- (6,2);
\draw [line width=1pt] (6,2)-- (9,4);
\draw [line width=1pt] (9,4)-- (10,3);
\draw [line width=1pt] (10,3)-- (13,4);
\draw [line width=1pt] (13,4)-- (16,1);
\draw [line width=1pt] (16,1)-- (17,1);
\draw [line width=1pt] (17,1)-- (17,17);
\draw [line width=1pt] (17,17)-- (16,17);
\draw [line width=1pt] (16,17)-- (13,20);
\draw [line width=1pt] (13,20)-- (10,19);
\draw [line width=1pt] (10,19)-- (9,20);
\draw [line width=1pt] (9,20)-- (6,18);
\draw [line width=1pt] (6,18)-- (5,19);
\draw [line width=1pt] (5,19)-- (1,17);
\draw [line width=1pt] (1,17)-- (1,1);
\draw [line width=0.3pt,color=ffqqqq] (1,5)-- (5,7);
\draw [line width=0.3pt,color=ffqqqq] (5,7)-- (6,6);
\draw [line width=0.3pt,color=ffqqqq] (6,6)-- (9,8);
\draw [line width=0.3pt,color=ffqqqq] (9,8)-- (9,16);
\draw [line width=0.3pt,color=ffqqqq] (9,16)-- (6,14);
\draw [line width=0.3pt,color=ffqqqq] (6,14)-- (5,15);
\draw [line width=0.3pt,color=ffqqqq] (5,15)-- (1,13);
\draw [line width=0.3pt,color=ffqqqq] (1,13)-- (1,5);
\draw [line width=0.3pt,color=qqqqff] (9,10)-- (9,8);
\draw [line width=0.3pt,color=qqqqff] (9,8)-- (10,7);
\draw [line width=0.3pt,color=qqqqff] (10,7)-- (13,8);
\draw [line width=0.3pt,color=qqqqff] (13,8)-- (13,10);
\draw [line width=0.3pt,color=qqqqff] (13,10)-- (10,9);
\draw [line width=0.3pt,color=qqqqff] (10,9)-- (9,10);
\draw [shift={(13,6)},line width=0.3pt,color=qqqqff,fill=qqqqff,fill opacity=0.5]  (0,0) --  plot[domain=1.5707963267948966:2.0071286397934798,variable=\t]({1*1*cos(\t r)+0*1*sin(\t r)},{0*1*cos(\t r)+1*1*sin(\t r)}) -- cycle ;
\draw [line width=0.3pt,color=qqqqff] (13,6)-- (15,4);
\draw [line width=0.3pt,color=qqqqff] (15,4)-- (15,5);
\draw [line width=0.3pt,color=qqqqff] (15,5)-- (13,7);
\draw [line width=0.3pt,color=qqqqff] (13,7)-- (13,6);
\draw [line width=0.3pt,color=ffqqqq] (17,2)-- (16,2);
\draw [line width=0.3pt,color=ffqqqq] (16,2)-- (15,3);
\draw [line width=0.3pt,color=ffqqqq] (15,3)-- (15,5);
\draw [line width=0.3pt,color=ffqqqq] (15,5)-- (16,4);
\draw [line width=0.3pt,color=ffqqqq] (16,4)-- (17,4);
\draw [line width=0.3pt,color=ffqqqq] (17,4)-- (17,2);
\draw (11.614989035141308,16.47793199806682) node[anchor=north west] {$\hat{Q}$};
\draw [line width=0.3pt,color=qqqqff] (13,8)-- (13,10);
\draw [line width=0.3pt,color=qqqqff] (13,10)-- (13.636029765969887,9.363970234030113);
\draw [line width=0.3pt,color=qqqqff] (13.636029765969887,9.363970234030113)-- (13,8);
\end{tikzpicture}

    \caption{\small This figure shows how a domain $\widetilde{R(\hat{Q})}$ is constructed. It is a union of red and blue sets of the form $\widetilde{\hat{Q}_k}$.}
  \end{minipage}
\end{figure}

 However, those parts may only be contained in cubes in the same generation (in the dyadic decomposition) as $T(\hat{Q}_j)$ and intersect only finitely many of such cubes whose number is estimated by a constant $C(n,\alpha)$. To be more specific, notice that the distance of a point in $\Gamma_{\alpha,0,\frac12l(Q_j)}(X)$ to the axis of the cone can be at most $\frac12\alpha l(Q_j)$. Hence, as we are only interested in cubes in the same generation as $T(\hat{Q}_j)$, in each direction such a cone can only intersect at most $\lceil\tfrac{\alpha}{2}\rceil$ other cubes. Moreover, as faces of $\hat{Q}_j$ are $n$-dimensional, a cone can overlap with up to $\om_n(\lceil\frac{\alpha}{2}\rceil)^n$ other cubes, where $\om_n$ stands for the measure of $n$-dimensional unit ball. 
  Therefore, upon adding up in~\eqref{est-case2} over all cubes $\hat{Q}_j\in G_1(\hat{Q})$, we increase the constant on the right-hand side only by a factor of $C(n,\alpha)+1$. Thus, also the discussion of case 2 is completed.
 
 In order to estimate the sum in the assertion of the lemma we now combine cases 1 and 2. For this, we also need to analyze how a red set $T(\hat{Q}_j)$ may intersect other red sets. Notice that the case of cubes  in the same generation as a red $T(\hat{Q}_j)$ is already taken care of above. However, it may happen that $T(\hat{Q}_j)$ intersects with sets that belong to one generation below the one of $T(\hat{Q}_j)$, i.e. to $(m-1)$-th generation for $T(\hat{Q}_j)$ belonging to the $m$-th generation, for some $m$. However, since the number of such cubes is finite,  $T(\hat{Q}_j)$ can only intersect $C(n)$ of such cubes.  
  
 Finally, we combine estimates~\eqref{est-case1} and~\eqref{est-case2} to arrive at the assertion of Lemma~\ref{lem1}:
\[
\sum_{\hat{Q}_j\in G_1(\hat{Q})}l(Q_j)^n\le C\varepsilon^{-2}\int_{\widetilde{R(\hat{Q})}}|\nabla u(y)|^2(y-\phi(x)),
\]
where $\widetilde{R(\hat{Q})}:=\bigcup_{\hat{Q}_j\in G_1(\hat{Q})}\widetilde{\hat{Q_j}}$ with
\begin{equation}\label{def-R-tilde2}
 \widetilde{\hat{Q_j}}=
 \begin{cases}
 T(\hat{Q}_j),\quad \hbox{ if } T(\hat{Q}_j) \hbox{ is red }\\
 \bigcup_{X\in U\hat{Q}_j}\Gamma_{\alpha,0,\frac{1}{2}l(Q_j)}(X),\quad \hbox{ if }T(\hat{Q}_j) \hbox{ is blue}.
 \end{cases}
 \end{equation}
See Figures 6 and 7 illustrating the construction of the set $\widetilde{R(\hat{Q})}$.

Notice, that by~\eqref{est-case1} and~\eqref{est-case2}, the assertion of the lemma holds with $C$ depending on $\max\{k^{-2}, (1-k)^{-2}\}$ and, thus taking into account also~\eqref{aux3-lem2}, any $0<k<1$ is suitable.
\end{proof}

Lemma~\ref{lem1} implies the following observation. 
\begin{lem}\label{lem2}
Let $\hat{Q}\in G$. Then,
$\sum_{\hat{Q}_j\in G,\hat{Q}_j\subset\hat{Q}}l(Q_j)^n\le C\varepsilon^{-2}l(Q)^n$.
\end{lem}
\begin{proof}
It holds that
\begin{align*}
\sum_{\hat{Q}_j\in G,\hat{Q}_j\subset\hat{Q}}l(Q_j)^n&=\sum_{k\ge 0}\sum_{\hat{Q}_j\in G_k(\hat{Q})}l(Q_j)^n\\
&=l(Q)^n+\sum_{k\ge 1}\sum_{\hat{Q}_j\in G_k(\hat{Q})}l(Q_j)^n\\
&=l(Q)^n+\sum_{k\ge 1}\,\,\sum_{\hat{Q}^{'}\in G_{k-1}(\hat{Q})}\,\,\sum_{\hat{Q}_j\in G_1(\hat{Q}^{'})}l(Q_j)^n\\
&\lesssim_{n, L, \theta, \eta} l(Q)^n+\varepsilon^{-2}\sum_{k\ge 1}\sum_{\hat{Q}^{'}\in G_{k-1}(\hat{Q})}\int_{\widetilde{R(\hat{Q}^{'})}}|\nabla u(x,y)|^2(y-\phi(x))\,\ud x \ud y\quad \tag{Lemma~\ref{lem1}}\\
&\lesssim_{n, L, \theta, \eta} l(Q)^n+\varepsilon^{-2}\int_{C(n,\alpha)\hat{Q}}|\nabla u(x,y)|^2(y-\phi(x))\,\ud x \ud y,\end{align*}
where the second inequality follows, by the discussion similar to the one at the end of the proof of Lemma~\ref{lem1}, from the fact that any cube may be counted at most finitely many times with the uniform constant depending on $n$ and $\alpha$. However, since sets $\widetilde{R(\hat{Q}^{'})}$ may contain also unions of cones, we may need to consider a cube bigger than $\hat{Q}$ so that $\bigcup \widetilde{R(\hat{Q}^{'})}\subset C(n,\alpha)\hat{Q}$. The proof of Lemma~\ref{lem2} will be completed once we show that
\begin{equation}\label{lem3-final-est}
\int_{C(n,\alpha)\hat{Q}}|\nabla u(x,y)|^2(y-\phi(x))\,\ud x \ud y \lesssim_{n, L, \theta, \eta} l(Q)^n.
\end{equation}
In order to prove this estimate, observe that for any point $\om\in C(n,\alpha)\hat{Q}\cap \partial \Om$, i.e. in the bottom face of cube $C(n,\alpha)\hat{Q}$, it holds that
\[
 z\in \Gamma_{\alpha, 0, C(n,\alpha)l(Q)}(\om)\,\Leftrightarrow\, \om \in B(z, (1+\alpha)d(z, \partial \Om)) \cap \partial \Om.
\]
The set on the right-hand side is sometimes called a shadow of point $z$. Moreover, notice that for $z=(x,y)\in C(n,\alpha)\hat{Q}$ it holds that $y-\phi(x) \lesssim_{n,L} d(z, \partial \Om)$. These observations together with the Fubini theorem allow us to obtain the following estimate
\begin{align}
&\int_{C(n,\alpha)\hat{Q}}|\nabla u(x,y)|^2(y-\phi(x))\,\ud x \ud y \nonumber \\
&\approx \int_{C(n,\alpha)\hat{Q}}|\nabla u(x,y)|^2 (y-\phi(x))^{1-n} (y-\phi(x))^{n}\,\ud x \ud y \label{lem2-est333}\\
&\approx_{n,L, \alpha}\int_{C(n,\alpha)\hat{Q}}|\nabla u(x,y)|^2 (y-\phi(x))^{1-n} \left(\int_{\partial \Om} \chi_{B(z, (1+\alpha)d(z, \partial \Om)) \cap \partial \Om}\ud \sigma \right)\,\ud x \ud y \nonumber  \\
&\approx_{n,L, \alpha}\int_{C(n,\alpha)\hat{Q}}|\nabla u(x,y)|^2 (y-\phi(x))^{1-n} \left(\int_{\partial \Om} \chi_{\Gamma_{\alpha, 0, l(Q)}}\ud \sigma \right) \,\ud x \ud y \nonumber \\
&\approx_{n,L, \alpha}\int_{\partial \Om} \left(\int_{C(n,\alpha)\hat{Q}}|\nabla u(x,y)|^2 (y-\phi(x))^{1-n}  \chi_{\Gamma_{\alpha, 0, l(Q)}} \,\ud x \ud y\right)\, \ud \sigma\tag{\small{Fubini's theorem}}\nonumber \\
&\approx_{n,L, \alpha}\int_{Q} \left(A_{\alpha, 0, C(n,\alpha)l(Q)} u\right)^2(x)\,\ud x
\lesssim (C(n,\alpha)l(Q))^n. \nonumber
\end{align}
The last inequality follows from the following observation, whose proof we present in the appendix.
\begin{prop}\label{prop24}
 Let $\Om\subset \R^{n+1}_{+}$ be the Lipschitz-graph domain as in~\eqref{def-dom} and let further $u:\Om\rightarrow\mathbb{R}$ be bounded and satisfy condition~\eqref{cond-main}. Then for any dyadic cube $Q\subset \Rn$ it holds that 
 \[
  \int_{Q} \left(A_{\alpha, 0, l(Q)} u\right)^2(x)\,\ud x<c (l(Q))^n,
 \]
 where the constant $c$ depends only on $\alpha$, $\theta$ as in~\eqref{cond-main}, $n$ and the Lipschitz constant $L$ of $\phi$.
\end{prop}
Therefore, the inequality~\eqref{lem3-final-est} is proven and, hence the proof of Lemma~\ref{lem2} is completed.
\end{proof}
Upon combining the discussion in~\eqref{claim-case1} and~\eqref{claim-case2} together with Lemma~\ref{lem1} we complete the proof of Proposition~\ref{Claim1}. 
\end{proof} 
{\sc Continuation of the proof of Theorem~\ref{thm-main}}:

Recall, that as already mentioned in the discussion following~\eqref{def-phi1} and~\eqref{ineq-phi1-Carl}, Proposition~\ref{Claim1} shows that $|\nabla\varphi_1| \ud \mathscr{L}^{n+1}$ is a Carleson measure in $\Om$. Let us now define the following function $\varphi:\Om\to\R$:
\begin{equation}\label{def-eps-apr}
\varphi(z)=
\begin{cases}
u(z) &\textnormal{ if $z$ belongs to any red } T(\hat{Q})\\
\varphi_1(z) & \textnormal{ otherwise}
\end{cases}
\end{equation}
Our goal is to prove that $\varphi$ is an $\varepsilon$-approximation of $u$ as in Definition~\ref{def-ea}. Denote by
\smallskip

\centerline{
 {\sc Red} the union of all red sets $T(\hat{Q})$ and by {\sc Blue} the union of all blue sets $T(\hat{Q})$,  for $\hat{Q}\subset \hat{Q_0}$.
 }
\smallskip

If $z\in\hbox{\sc Red}$, then $u(z)-\varphi(z)=0$, whereas if $z\in\hbox{\sc Blue}$, then $z\in R(\hat{Q})$ for some $\hat{Q}\in G$. Suppose that $z\in T(\hat{Q}_1)$. Since, by the definition~\eqref{def-TQ}, the set $T(\hat{Q}_1)$ is a vertical translation of cube $\hat{Q}_1$, its upper half may be a subset of one $R$-set (i.e. $R(\hat{Q})$ for some $\hat{Q}\in G$), while its lower half may lie in another $R$-set.  Moreover, it can also happen that $T(\hat{Q}_1)$ is entirely contained in one $R$-set. This discussion leads to the following two cases. 

If $T(\hat{Q}_1)\subset R(\hat{Q})$, then
\begin{align*}
|u(z)-\varphi(z)|&=|u(z)-\varphi_1(z)| \\
&=|u(z)-u(x_{\hat{Q}_1})\chi_{R(\hat{Q})}(z)| \le |u(z)-u(x^{l}_{\hat{Q}_1})|+ |u(x^{l}_{\hat{Q}_1})-u(x_{\hat{Q}})|\le k\varepsilon + \varepsilon,
\end{align*}
where the first $k\varepsilon$ comes from the fact that $T(\hat{Q}_1)$ is blue and the second $\varepsilon$ is obtained because $\hat{Q}_1$ is not entirely removed from $R(\hat{Q})$.

If $T(\hat{Q}_1)\not\subset R(\hat{Q})$, then suppose first that $z$ belongs to the lower half of $T(\hat{Q}_1)$, i.e. $z\in T(\hat{Q}_1)\cap \hat{Q}_1$. Then $z\in R(\hat{Q}_1)$ and since $T(\hat{Q}_1)$ is blue, we have that $|u(z)-\varphi(z)|\le k\varepsilon$.

If $z$ belongs to the upper half of $T(\hat{Q}_1)$, i.e. $z\in T(\hat{Q}_1)\setminus \hat{Q}_1$, then $z$ lies in $T(P\hat{Q}_1)$, where $P\hat{Q}_1$ denotes the \emph{parent} of $\hat{Q}_1$, meaning the smallest cube $\hat{Q}$ containing $\hat{Q}_1$. Moreover, $z\in T(P\hat{Q}_1)\cap P\hat{Q}_1$. If $T(P\hat{Q}_1)$ is blue, then $|u(z)-\varphi(z)|\le k\varepsilon$ in a similar manner as before. Otherwise, if $T(P\hat{Q}_1)$ is red, then this case has already been taken care of above (see also Figure 8).

\begin{figure}
    \centering
    \definecolor{ududff}{rgb}{0.30196078431372547,0.30196078431372547,1}
\definecolor{ffqqtt}{rgb}{1,0,0.2}
\definecolor{qqzzqq}{rgb}{0,0.6,0}
\definecolor{ffwwzz}{rgb}{1,0.4,0.6}
\definecolor{ffxfqq}{rgb}{0.8,0.4980392156862745,0}
\definecolor{zzffzz}{rgb}{0.6,1,0.6}
\definecolor{zzttqq}{rgb}{0.6,0.2,0}
\begin{tikzpicture}[scale=0.55][line cap=round,line join=round,>=triangle 45,x=1cm,y=1cm]
\clip(-4.317607402969818,-0.8004920755514504) rectangle (13.866858647739768,16.198030537068384);

\fill[line width=2pt,color=zzffzz,fill=zzffzz,pattern=north east lines,pattern color=zzffzz] (1,1) -- (2,2) -- (3,0) -- (4,1) -- (5,3) -- (5,7) -- (4,5) -- (3,4) -- (2,6) -- (1,5) -- cycle;
\fill[line width=2pt,color=ffxfqq,fill=ffxfqq,fill opacity=0.17] (1,5) -- (2,6) -- (3,4) -- (4,5) -- (5,7) -- (7,5) -- (8,6) -- (9,6) -- (9,14) -- (8,14) -- (7,13) -- (5,15) -- (4,13) -- (3,12) -- (2,14) -- (1,13) -- cycle;
\fill[line width=2pt,color=ffwwzz,fill=ffwwzz,fill opacity=0.73] (1,7) -- (1,3) -- (2,4) -- (3,2) -- (4,3) -- (5,5) -- (5,9) -- (4,7) -- (3,6) -- (2,8) -- cycle;
\draw [line width=2pt,color=zzttqq] (1,1)-- (2,2);
\draw [line width=2pt,color=zzttqq] (2,2)-- (3,0);
\draw [line width=2pt,color=zzttqq] (3,0)-- (4,1);
\draw [line width=2pt,color=zzttqq] (4,1)-- (5,3);
\draw [line width=2pt,color=zzttqq] (5,3)-- (7,1);
\draw [line width=2pt,color=zzttqq] (7,1)-- (8,2);
\draw [line width=2pt,color=zzttqq] (8,2)-- (9,2);
\draw [line width=2pt,color=zzttqq] (9,2)-- (9,10);
\draw [line width=2pt,color=zzttqq] (9,10)-- (8,10);
\draw [line width=2pt,color=zzttqq] (8,10)-- (7,9);
\draw [line width=2pt,color=zzttqq] (7,9)-- (5,11);
\draw [line width=2pt,color=zzttqq] (5,11)-- (4,9);
\draw [line width=2pt,color=zzttqq] (4,9)-- (3,8);
\draw [line width=2pt,color=zzttqq] (3,8)-- (2,10);
\draw [line width=2pt,color=zzttqq] (2,10)-- (1,9);
\draw [line width=2pt,color=zzttqq] (1,9)-- (1,1);
\draw [line width=2pt,color=zzffzz] (1,1)-- (2,2);
\draw [line width=2pt,color=zzffzz] (2,2)-- (3,0);
\draw [line width=2pt,color=zzffzz] (3,0)-- (4,1);
\draw [line width=2pt,color=zzffzz] (4,1)-- (5,3);
\draw [line width=2pt,color=zzffzz] (5,3)-- (5,7);
\draw [line width=2pt,color=zzffzz] (5,7)-- (4,5);
\draw [line width=2pt,color=zzffzz] (4,5)-- (3,4);
\draw [line width=2pt,color=zzffzz] (3,4)-- (2,6);
\draw [line width=2pt,color=zzffzz] (2,6)-- (1,5);
\draw [line width=2pt,color=zzffzz] (1,5)-- (1,1);
\draw [line width=2pt,color=ffxfqq] (1,5)-- (2,6);
\draw [line width=2pt,color=ffxfqq] (2,6)-- (3,4);
\draw [line width=2pt,color=ffxfqq] (3,4)-- (4,5);
\draw [line width=2pt,color=ffxfqq] (4,5)-- (5,7);
\draw [line width=2pt,color=ffxfqq] (5,7)-- (7,5);
\draw [line width=2pt,color=ffxfqq] (7,5)-- (8,6);
\draw [line width=2pt,color=ffxfqq] (8,6)-- (9,6);
\draw [line width=2pt,color=ffxfqq] (9,6)-- (9,14);
\draw [line width=2pt,color=ffxfqq] (9,14)-- (8,14);
\draw [line width=2pt,color=ffxfqq] (8,14)-- (7,13);
\draw [line width=2pt,color=ffxfqq] (7,13)-- (5,15);
\draw [line width=2pt,color=ffxfqq] (5,15)-- (4,13);
\draw [line width=2pt,color=ffxfqq] (4,13)-- (3,12);
\draw [line width=2pt,color=ffxfqq] (3,12)-- (2,14);
\draw [line width=2pt,color=ffxfqq] (2,14)-- (1,13);
\draw [line width=2pt,color=ffxfqq] (1,13)-- (1,5);
\draw [line width=2pt,color=ffwwzz] (1,7)-- (1,3);
\draw [line width=2pt,color=ffwwzz] (1,3)-- (2,4);
\draw [line width=2pt,color=ffwwzz] (2,4)-- (3,2);
\draw [line width=2pt,color=ffwwzz] (3,2)-- (4,3);
\draw [line width=2pt,color=ffwwzz] (4,3)-- (5,5);
\draw [line width=2pt,color=ffwwzz] (5,5)-- (5,9);
\draw [line width=2pt,color=ffwwzz] (5,9)-- (4,7);
\draw [line width=2pt,color=ffwwzz] (4,7)-- (3,6);
\draw [line width=2pt,color=ffwwzz] (3,6)-- (2,8);
\draw [line width=2pt,color=ffwwzz] (2,8)-- (1,7);
\draw [color=qqzzqq](2.74863891564831,2.040830744871923) node[anchor=north west] {$\hat{Q_1}$};
\draw [color=ffqqtt](2.526274521006481,6.191632778186068) node[anchor=north west] {$T(\hat{Q_1})$};
\draw [color=zzttqq](6.578247934479812,4.635082015693263) node[anchor=north west] {$P\hat{Q_1}$};
\draw [color=ffxfqq](4.156946748379895,12.664907377759318) node[anchor=north west] {$T(P\hat{Q_1})$};
\draw (1.8097670271605868,7.179918976594198) node[anchor=north west] {$z$};
\draw [line width=2pt,color=zzttqq] (1,1)-- (2,2);
\draw [line width=2pt,color=zzttqq] (2,2)-- (3,0);
\draw [line width=2pt,color=zzttqq] (3,0)-- (4,1);
\draw [line width=2pt,color=zzttqq] (4,1)-- (5,3);
\draw [line width=2pt,color=zzttqq] (1,1)-- (1,9);
\begin{scriptsize}
\draw [fill=ududff] (1.8036677983636795,6.779269720778658) circle (2.5pt);
\end{scriptsize}
\end{tikzpicture}

    \caption{\small This figure shows the situation when $z$ belongs to the upper half of $T(\hat{Q}_1)$. A green cube is a cube $\hat{Q}_1$, the cube bounded by a brown line is a parent of $\hat{Q}_1$, i.e. $P\hat{Q}_1$.}
    \label{fig:enter-label}
\end{figure}
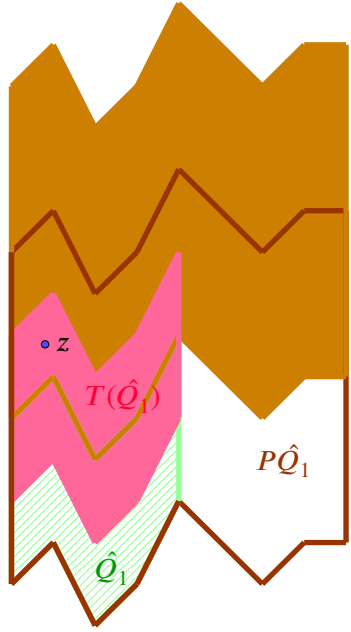

This discussion shows that $\|u-\varphi\|_{\infty}\le (k+1)\varepsilon$.

Notice that 
\begin{equation}\label{Carl-decomp}
\nabla\varphi = (\nabla\varphi_1)\chi_{\hat{Q}_0\setminus{\sc Red}}+(\nabla u)\chi_{\sc Red}+J,
\end{equation}
 where $J$ denotes jumps along boundaries of {\sc Red}. We already proved that $|\nabla\varphi_1|\ud \mathscr{L}^{n+1}$ is a Carleson measure. Therefore, by the definition of $\varphi$ in~\eqref{def-eps-apr}, it remains to prove that on the set {\sc Red}, the measure $|\nabla \varphi|\ud \mathscr{L}^{n+1}=|\nabla u|\ud \mathscr{L}^{n+1}$ is a Carleson measure and that also $J$ gives a Carleson measure. Since $\|u\|_{\infty}\le 1$ and $\|u-\varphi\|_{\infty}\lesssim_k\varepsilon$ it follows that $|J|\lesssim (1+\varepsilon)\sum_{T(\hat{Q}_j) \textnormal{ red}}l(Q_j)^n$. From those it will follow that $|\nabla\varphi|$ defines a Carleson measure.

Our goal amounts to proving the two inequalities (Car1) and (Car2). The first one allows us to handle the $\nabla u$ term in~\eqref{Carl-decomp}, while (Car2) takes care of the $J$ part.
\begin{align*}
&\tag{Car1}\quad \sum_{T(\hat{Q}_j) \textnormal{ red},\, \hat{Q}_j\subset \hat{Q} } \int_{T(\hat{Q}_j)}|\nabla u|\lesssim_{n, L, \eta} \frac{1}{\ep} l(Q)^n, \\
&\tag{Car2}\quad \sum_{T(\hat{Q}_j) \textnormal{ red},\, \hat{Q}_j\subset \hat{Q} } l(Q_j)^n\lesssim_{n, L, \eta} \frac{1}{\ep^2}  l(Q)^n.
\end{align*}
Let us begin with proving (Car2). Let us choose a finite cover by hyperbolic balls centered at points of any given cube $T(\hat{Q}_j)$. Since we are interested now in red cubes, we have by~\eqref{est-Morrey} that for any such ball $B_r$ of radius $r$ from the covering of $T(\hat{Q}_j)$ it holds 
\begin{equation}\label{est-aux-mthm}
\varepsilon^2\le (\osc_{B_r} u)^2\lesssim r^{1-n}\int_{(1+\eta)B_r} |\nabla u|^2,\quad\hbox{for some fixed }\eta\in [0,1).
\end{equation}
Notice that for any point $z\in T(\hat{Q}_j)$ it holds that the distance $\delta_j (z)$ of $z$ to the bottom face of $\hat{Q}_j$, satisfies $\frac12 l(Q_j)\leq \delta_j (z) \leq l(Q_j)$. Moreover, for any hyperbolic ball $B_r$ containing $z$ it holds that $r\leq  \delta_j (z)  \leq Cr$, for some fixed numerical constant $C>0$. Thus, we have that
\[ 
 r\approx \delta_j (z) \approx l(Q_j).
\]
Let us fix one ball $B_r$ from the covering of $T(\hat{Q}_j)$, centered at the point $x_{\hat{Q}_j}+\overline{e_{n+1}}\frac12l(Q_j)$ a center of $T(\hat{Q}_j)$ and such that $(1+\eta)B_r\Subset T(\hat{Q}_j)$. Such a ball can be obtained by similar reasoning as in~\eqref{ball-box} and $r:=\frac{l(Q_j)}{(1+\eta)(C(n,L)+1)^4}$ would suffice.

Therefore, upon multiplying inequality~\eqref{est-aux-mthm} by $l(Q_j)^n$,  we get
\begin{equation}\label{est-Car12}
l(Q_j)^n\le \frac{1}{\varepsilon^2}\int_{T(\hat{Q}_j)}|\nabla u|^2 \delta_j(z),
\end{equation}
as the above constructed ball $B_r$ satisfies $(1+\eta)B_r\subset T(\hat{Q}_j)$. From this and the H\"older inequality together with~\eqref{ball-box} we infer the following estimate 
\begin{align*}
\left(\int_{T(\hat{Q}_j)}|\nabla u|\right)^2 \lesssim_{n,L} \left(\int_{T(\hat{Q}_j)}|\nabla u|^2\right)l(Q_j)^{n+1}\approx\left(\int_{T(\hat{Q}_j)}|\nabla u|^2 \delta_j\right)l(Q_j)^n
\lesssim \frac{1}{\varepsilon^2}\left(\int_{T(\hat{Q}_j)}|\nabla u|^2 \delta_j\right)^2,
\end{align*}
which gives
$$
\int_{T(\hat{Q}_j)}|\nabla u|\lesssim_{n, L}  \frac{1}{\varepsilon}\int_{T(\hat{Q}_j)}|\nabla u|^2 \delta_j.
$$
We now proceed with the first inequality (Car1), as it turns out that proving it, will also complete the proof of (Car2). 
\begin{align}
\sum_{T(\hat{Q}_j) \textnormal{ red},\,\hat{Q}_j\subset \hat{Q} } \int_{T(\hat{Q}_j)}|\nabla u(z)|&\lesssim \sum_{T(\hat{Q}_j) \textnormal{ red},\,\hat{Q}_j\subset \hat{Q} }\frac{1}{\ep}\int_{T(\hat{Q}_j)}|\nabla u(z)|^2 \delta_j(z)\lesssim_{n,L} \frac{1}{\ep}\int_{\hat{Q}}|\nabla u(z)|^2 \delta(z), \label{est2-Car12}
\end{align}
where in the last inequality, by $\delta(z)$ we denote the distance of z point $z$ to the bottom face of  $\hat{Q}$. Moreover, the last inequality holds true, due to observation that although sets $T(\hat{Q}_j)$ may, in general intersect, for different $\hat{Q}_j$, each point in $\hat{Q}$ belongs to at most two sets $T(\hat{Q}_j)$. Thus, the integral on the right-hand side of the last estimate in~\eqref{est2-Car12} may increase at most twice. Finally, similarly to the discussion of estimate~\eqref{lem3-final-est} in the proof of Lemma~\ref{lem2}, we observe that for any point $\om\in \hat{Q}\cap \partial \Om$, i.e. in the bottom face of cube $\hat{Q}$, it holds that
\[
 z\in \Gamma_{\alpha, 0, l(Q)}(\om)\,\Leftrightarrow\, \om \in B(z, (1+\alpha)d(z, \partial \Om)) \cap \partial \Om.
\]
Moreover, notice that for $z=(x,y)\in \hat{Q}$ it holds that $\delta(z) \leq l(Q) \approx_{n,L} y-\phi(x)$. These observations, together with the analogous computations as in~\eqref{lem2-est333} and Proposition~\ref{prop24}, imply that
\begin{align}
\frac{1}{\ep}\int_{\hat{Q}}|\nabla u(z)|^2 \delta(z)\,\ud z &\approx_{n,L}
\frac{1}{\ep}\int_{\hat{Q}}|\nabla u(x,y)|^2 (y-\phi(x))^{1-n} (y-\phi(x))^{n}\,\ud x \ud y \nonumber \\
&\approx_{n,L, \alpha} \frac{1}{\ep} \int_{Q} \left(A_{\alpha, 0, l(Q)} u\right)^2(x)\,\ud x
\lesssim \frac{1}{\ep}(l(Q))^n. \label{est3-Car12}
\end{align}
%
This completes the argument for inequality (Car1) and the proof of (Car2) follows as well, upon combining~\eqref{est-Car12} with \eqref{est2-Car12} and~\eqref{est3-Car12}.

Hence, $|\nabla\varphi|\ud x\ud y$ is a Carleson measure and, therefore, the proof of the $\ep$-approximability of $u$ in $\hat{Q}_0$ is completed.

Notice that in the proof it is not important that we consider a unit cube. Hence, our reasoning gives $\varepsilon$-approximation for any cube $\hat{Q}$ regardless of its side length. To obtain an $\varepsilon$-approximation in set $\Omega$, we follow the approach in the end of the proof of Theorem 1.3 in~\cite[Section 5]{hmm}. Let us choose a point $x_0$ in $\mathbb{R}^n$. Let $Q_k$ be a family of cubes in $\mathbb{R}^n$ such that $x_0$ is a center of each of those cubes and $l(Q_k)=2^k$. Denote by $\varphi_k$ an $\varepsilon$-approximation on set $\hat{Q}_k$. Define
\[
\varphi:=\sum_{k=0}^{\infty} \varphi_k\chi_{\hat{Q}_{k+1}\setminus \hat{Q}_k}.
\] 
One can verify that $\varphi$ is an $\varepsilon$-approximation in $\Omega$ and we leave the details to the reader. Thus, the proof of $\varepsilon$-approximability of $u$ in $\Omega$ is completed.
\end{proof}

\begin{remark}\label{eps-reg}
As observed in several works (e.g.~\cite{hmm, ga, ht}), the regularity of the $\ep$-approximation $\phi$ obtained in the proof above, can be improved to $C^\infty$. Indeed, this follows by Lemmas 3.2 (i) 3.6 and 3.8 in~\cite{ht} and by the standard mollification procedure, see e.g.~\cite[Section 4.2]{eg}.
\end{remark}

\section{Examples of functions satisfying the condition~\eqref{cond-star}}

In this section we provide an example of the class of functions satisfying condition~\eqref{cond-star} and, moreover, that are also $\ep$-approxi\-mable. Furthermore, we also discuss some sufficient conditions giving~\eqref{cond-star} and~\eqref{cond-main}. The importance of the latter condition comes from the fact that it implies \eqref{cond-star}, see Introduction. Our examples illustrate that for some classes of functions, a condition~\eqref{cond-star} more general than~\eqref{cond-main}, suffices for the $\ep$-approxi\-mability to hold.

Let us recall Proposition 5.2 in~\cite{g}. It asserts that if $u$ is a $C^2$-regular function defined in a ball $B\subset \R^{n+1}$ such that $u^{2k}$ is subharmonic in $B$ for some positive integer $k>0$, then $u$ satisfies condition~\eqref{cond-star} in $B$. This observation gives the following wide class of functions satisfying Theorem~\ref{thm-main}.
\begin{prop}\label{prop-31}
Let $u\in C^2$ be nonnegative and subharmonic in an open set $\Om\subset \R^{n+1}$, i.e. $\Delta u\geq 0$. Then $u$ satisfies \eqref{cond-star}.  Moreover, $u$ is $\varepsilon$-approximable in domains $\Om$ as in~\eqref{def-dom}.
\end{prop}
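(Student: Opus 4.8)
The plan is to establish~\eqref{cond-star} directly and then obtain $\ep$-approximability by revisiting the proof of Theorem~\ref{thm-main} rather than quoting it verbatim, since a nonnegative subharmonic function need not satisfy~\eqref{cond-main}.

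First I would verify~\eqref{cond-star}. Since $u\geq 0$ and $\Delta u\geq 0$,
\[
\Delta(u^2)=2|\nabla u|^2+2u\Delta u\geq 0,
\]
so $u^2$ is subharmonic in $\Om$, and Proposition~5.2 in~\cite{g} applied with $k=1$ gives~\eqref{cond-star} on every ball $B_r$ with $2B_r\subset\Om$. This computation also records the identity that the whole argument will lean on: since $u\Delta u\geq 0$ one has $|u\Delta u|=u\Delta u$ and hence $|\nabla u|^2+|u\Delta u|=\tfrac12\Delta(u^2)$, so the \emph{convenient} area function from~\cite{g}, built from the density $|\nabla u|^2+|u\Delta u|$, is (up to a dimensional constant) the classical Lusin area integral of the bounded subharmonic function $u^2$.

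For the $\ep$-approximability I would re-run the proof of Theorem~\ref{thm-main} with the density $|\nabla u|^2$ replaced everywhere by $|\nabla u|^2+|u\Delta u|$. Condition~\eqref{cond-main} enters that proof in only three places: (i) to pass from~\eqref{cond-star} to the Morrey estimate~\eqref{est-Morrey}, used in Case~1 of Lemma~\ref{lem1} and in~\eqref{est-aux-mthm}; (ii) in the comparison $N_\alpha\lesssim A_\alpha$ (the local $p=2$ case of Theorem~1.1(b) in~\cite{g}) used in Case~2 of Lemma~\ref{lem1}; and (iii) in Proposition~\ref{prop24}, which is where the restriction $0<\theta<1$ actually gets used. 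I would replace (i) by~\eqref{cond-star} itself, $(\osc_{B_r}u)^2\lesssim r^{1-n}\int_{(1+\eta)B_r}(|\nabla u|^2+|u\Delta u|)$, carrying the extra $|u\Delta u|$ term along inside all the area-function integrals of Lemmas~\ref{lem1}--\ref{lem2} and of (Car1)--(Car2); (ii) by Theorem~1.1(b) of~\cite{g} with $p=2$ for the pair $N_\alpha u$ and the convenient area function $\mathcal A_\alpha u$, which holds under hypothesis~\eqref{cond-star} and hence is applicable by the first step; and (iii) by the bound $\int_Q(\mathcal A_{\alpha,0,l(Q)}u)^2\lesssim l(Q)^n$, which, after the harmless normalization $\|u\|_\infty\leq 1$, is precisely the classical Carleson-type estimate for the area integral of the bounded subharmonic function $u^2$, i.e.\ exactly what the proofs of Lemmas~4.3 and~4.5 in~\cite{g} deliver in this setting. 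With these three substitutions, the statements and proofs of Lemmas~\ref{lem1} and~\ref{lem2}, of Proposition~\ref{Claim1}, and of (Car1)--(Car2) carry over word for word, so the function $\varphi$ of~\eqref{def-eps-apr} is an $\ep$-approximation of $u$ on $\hat{Q}_0$, and the exhaustion argument at the end of the proof of Theorem~\ref{thm-main} upgrades this to an $\ep$-approximation on all of $\Om$.

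The main obstacle I expect is point (iii): one has to check that the arguments behind Theorem~1.1 and Lemmas~4.3, 4.5 of~\cite{g} produce the required comparison and $L^2$ bound using~\eqref{cond-star} alone, i.e.\ that the strict inequality $\theta<1$ there is needed only to absorb $|u\Delta u|$ into $|\nabla u|^2$ and becomes superfluous once that term is kept on the area-function side and the sign $u\Delta u\geq 0$ is exploited. Concretely this amounts to matching the classical estimate for the area integral of the bounded subharmonic function $u^2$ to the normalization of the truncated cones $\Gamma_{\alpha,s,t}$ used above; this is routine bookkeeping, but it is the only genuinely new verification.
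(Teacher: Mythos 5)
Your proposal is correct and follows essentially the same route as the paper: the first assertion via subharmonicity of $u^2$ and Proposition 5.2 of~\cite{g}, and the second by observing that $|\nabla u|^2+|u\Delta u|=\tfrac12\Delta(u^2)\geq |\nabla u|^2$ so that the lemmas from~\cite{g} underlying Theorem~\ref{thm-main} (the $N\lesssim A$ comparison and the Caccioppoli/area-integral bounds) survive with~\eqref{cond-star} in place of~\eqref{cond-main}. The paper's own argument is a terse version of exactly this; your proposal merely makes explicit the three places where the substitution is needed.
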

The proof of the first assertion follows by direct computations showing that $\Delta u^{2k}\geq 0$ for any positive integer $k>0$. The second assertion follows immediately from Theorem~\ref{thm-main}, upon noticing that proofs of Lemmas 4.5 and 4.6 in~\cite{g} hold for as well for functions satisfying assumptions of the proposition. Indeed, the scrutiny of the proofs of Lemmas 4.5 and 4.6 reveals that under assumptions of the proposition, it holds that $\Delta u^2=2(|\nabla u|^2+u\Delta u) \geq 2|\nabla u|^2$ and Lemma 4.6 in~\cite{g} follows, if $u$ satisfies~\eqref{cond-star}.

\begin{prop}
  Let $\Om\subset \R^{n+1}$ be an open set and $u\in C^2(\Om)$ be nonnegative and such that  
\[
\Delta|\nabla u|^{\alpha} \geq 0
\]
for any $0<\alpha \leq 2$. Then, conditions~\eqref{cond-star} holds.
\end{prop}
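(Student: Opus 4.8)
The plan is to read off~\eqref{cond-star} directly from the sub-mean value property of the subharmonic function $v:=|\nabla u|^{\alpha}$, combined with the elementary estimate bounding the oscillation of a $C^1$ function on a ball by $r$ times the supremum of its gradient. In fact I would prove the slightly stronger inequality in which the $|u\Delta u|$-term is simply dropped, since $|u\Delta u|\ge 0$ makes the passage to~\eqref{cond-star} automatic, and I will work with $\eta=\tfrac12$, so that the relevant integration takes place over $\tfrac32 B_r\subset 2B_r\subset\Om$. So fix a ball $B_r=B(x_c,r)$ with $2B_r\subset\Om$. First I would note that, as $u\in C^2(\Om)\subset C^1(\Om)$ and $B_r$ is convex, integrating $\nabla u$ along the segment $[x,y]\subset B_r$ gives $|u(x)-u(y)|\le 2r\sup_{B_r}|\nabla u|$ for all $x,y\in B_r$, and therefore $\osc_{B_r}(u)\le 2r\sup_{B_r}|\nabla u|$.

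The next step bounds $\sup_{B_r}|\nabla u|$ by an $L^2$-average over $\tfrac32 B_r$. For every $x\in B_r$ one has $\overline{B(x,r/2)}\subset\tfrac32 B_r\subset 2B_r\subset\Om$, so the sub-mean value inequality for the continuous subharmonic function $v=|\nabla u|^{\alpha}\ge 0$ yields
\[
|\nabla u(x)|^{\alpha}=v(x)\le\frac{1}{|B_{r/2}|}\int_{B(x,r/2)}v\,\ud\mathscr{L}^{n+1}\le\frac{1}{|B_{r/2}|}\int_{\frac32 B_r}|\nabla u|^{\alpha}\,\ud\mathscr{L}^{n+1}=\frac{3^{n+1}}{|B_{3r/2}|}\int_{\frac32 B_r}|\nabla u|^{\alpha}\,\ud\mathscr{L}^{n+1}.
\]
Taking the supremum over $x\in B_r$, then the $\tfrac1\alpha$-th power, and invoking Jensen's inequality for the concave map $t\mapsto t^{\alpha/2}$ (licit since $0<\alpha/2\le 1$) in the form $\big(\tfrac{1}{|B_{3r/2}|}\int_{\frac32 B_r}|\nabla u|^{\alpha}\big)^{1/\alpha}=\big(\tfrac{1}{|B_{3r/2}|}\int_{\frac32 B_r}(|\nabla u|^2)^{\alpha/2}\big)^{1/\alpha}\le\big(\tfrac{1}{|B_{3r/2}|}\int_{\frac32 B_r}|\nabla u|^2\big)^{1/2}$, I would arrive at $\sup_{B_r}|\nabla u|\le 3^{(n+1)/\alpha}\big(\tfrac{1}{|B_{3r/2}|}\int_{\frac32 B_r}|\nabla u|^2\,\ud\mathscr{L}^{n+1}\big)^{1/2}$.

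Combining the two estimates and writing $|B_{3r/2}|=c_nr^{n+1}$ then gives
\[
\osc_{B_r}(u)\le 2r\cdot 3^{(n+1)/\alpha}\Big(\frac{1}{c_nr^{n+1}}\int_{\frac32 B_r}|\nabla u|^2\,\ud\mathscr{L}^{n+1}\Big)^{1/2}=C(n,\alpha)\Big(r^{1-n}\int_{\frac32 B_r}|\nabla u|^2\,\ud\mathscr{L}^{n+1}\Big)^{1/2},
\]
and since $|\nabla u|^2\le|\nabla u|^2+|u\Delta u|$ pointwise, this is precisely~\eqref{cond-star} with $\eta=\tfrac12$ and a constant depending only on $n$ and $\alpha$. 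I do not anticipate a genuine difficulty here: the only point one should state with care is that $\Delta|\nabla u|^{\alpha}\ge 0$ is to be understood in the sense of subharmonicity (distributionally, say), so that the sub-mean value property used above is available even at points where $\nabla u=0$ and $|\nabla u|^{\alpha}$ need not be twice differentiable; and it is worth remarking that nonnegativity of $u$ itself plays no role for~\eqref{cond-star}, since $v=|\nabla u|^{\alpha}$ is automatically nonnegative.
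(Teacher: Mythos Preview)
Your argument is correct and follows essentially the same route as the paper's own proof: bound the oscillation by $r$ times a pointwise value of $|\nabla u|$ via the mean value theorem, invoke the sub-mean value property of the subharmonic function $|\nabla u|^{\alpha}$ to control that pointwise value by an $L^{\alpha}$-average, and then upgrade to an $L^{2}$-average by Jensen/H\"older. Your presentation is in fact slightly more careful than the paper's about the distributional meaning of the subharmonicity hypothesis at zeros of $\nabla u$, and your observation that the nonnegativity of $u$ plays no role is also correct.
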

\begin{proof}
 Since $|\nabla u|^{\alpha}$ is $C^2$-regular subharmonic, it satisfies the submean value property on Euclidean balls $B\subset kB\subset \Om$. Hence for any $x,y\in B$ and some point $z\in B$, lying on a line segment joining $x$ and $y$, we have
 \begin{align*}
  |u(x)-u(y)|^{\alpha}&\leq |\nabla u(z)|^{\alpha}|x-y|^{\alpha} \\
  &\leq C(n,\eta) \left(\vint_{(1+\eta)B}|\nabla u|^{\alpha}\right) r^{\alpha}\\
  &\leq  C(n,\eta) \left(\frac{1}{r^{n+1-2}}\int_{(1+\eta)B}|\nabla u|^2\right)^{\frac{\alpha}{2}}\\
  &\leq  C(n,\eta) \left[ \left(\frac{1}{r^{n-1}}\int_{(1+\eta)B}|\nabla u|^2+|u\Delta u|\right)^{\frac12}\right]^{\alpha}.
 \end{align*}
 Therefore, we proved that \eqref{cond-star} holds with $\phi(t)=Ct$ with C depending on $n, \eta$ and the diameter of domain $\Om$.
\end{proof}

Recall that a $C^2$-function satisfies~\eqref{cond-main}, if $|u\Delta u|\le\theta |\nabla u|^2$ for some $0<\theta<1$.  

\begin{prop}\label{prop-cond-star}
  Let $\Om\subset \R^{n+1}$ be an open set and $u$ be a $C^2$-function. If $u\Delta u\geq 0$ in $\Om$, then each of the following conditions implies \eqref{cond-star}: $\Delta \ln u\leq 0$, $\Delta u^{-1}\geq 0$.  Moreover, if $\Delta u^{\alpha}\leq 0$ for some $0<\alpha<1$, then condition \eqref{cond-main} holds with $\theta:=1-\alpha$, and hence, also \eqref{cond-star} holds.
 \end{prop}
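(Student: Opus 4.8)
The plan is to verify, in each of the three cases, a pointwise bound of the form $|u\Delta u|\le\theta|\nabla u|^2$ with an explicit constant $\theta$, and then to invoke Proposition~5.1 of~\cite{g}, which produces~\eqref{cond-star} from any such bound with $\theta>0$ (and which, for $0<\theta<1$, is precisely condition~\eqref{cond-main}). All three cases proceed along identical lines: differentiate the relevant scalar function of $u$ twice, and translate the sign hypothesis on its Laplacian, together with the standing hypothesis $u\Delta u\ge 0$, into a two-sided control of the ratio $u\Delta u/|\nabla u|^2$.

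First I would record the chain-rule identities, valid on the open set $\{u>0\}$ where the expressions $\ln u$, $u^{-1}$, $u^{\alpha}$ (for non-integer $\alpha$) are meaningful and, as in the preceding propositions of this section, the statement is understood:
\begin{gather*}
\Delta \ln u = \frac{u\Delta u - |\nabla u|^2}{u^2},\qquad
\Delta (u^{-1}) = \frac{2|\nabla u|^2 - u\Delta u}{u^3},\\
\Delta (u^{\alpha}) = \alpha\, u^{\alpha-2}\big(u\Delta u - (1-\alpha)|\nabla u|^2\big).
\end{gather*}
Combining each identity with $u\Delta u\ge 0$: the condition $\Delta\ln u\le 0$ is equivalent to $u\Delta u\le|\nabla u|^2$, hence to $|u\Delta u|\le|\nabla u|^2$, so~\eqref{cond-star} follows from Proposition~5.1 of~\cite{g} with $\theta=1$; since $u^{3}>0$, the condition $\Delta(u^{-1})\ge 0$ is equivalent to $u\Delta u\le 2|\nabla u|^2$, i.e.\ to $|u\Delta u|\le 2|\nabla u|^2$, giving~\eqref{cond-star} with $\theta=2$; and since $\alpha u^{\alpha-2}>0$, the condition $\Delta(u^{\alpha})\le 0$ is equivalent to $u\Delta u\le(1-\alpha)|\nabla u|^2$, i.e.\ to $|u\Delta u|\le(1-\alpha)|\nabla u|^2$, which for $0<\alpha<1$ is exactly condition~\eqref{cond-main} with $\theta=1-\alpha\in(0,1)$, so that~\eqref{cond-star} holds as well.

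Since the argument is purely computational, I do not expect a genuine obstacle; the only points requiring attention are the sign bookkeeping — the positivity of $u^{2}$, $u^{3}$ and $\alpha u^{\alpha-2}$, which guarantees that dividing by these factors preserves the direction of the inequalities — and the observation that the conclusions are drawn on $\{u>0\}$ (equivalently, one may take $u$ nonnegative, consistently with Proposition~\ref{prop-31} and the proposition preceding it). Were one to allow $u$ to change sign, the cases involving $\ln u$ and $u^{\alpha}$ with non-integer $\alpha$ would cease to be meaningful, while for $u^{-1}$ the sign of $u^{3}$ would reverse the relevant inequality.
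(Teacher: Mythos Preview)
Your proposal is correct and follows essentially the same approach as the paper: compute the Laplacian of $\ln u$, $u^{-1}$, and $u^{\alpha}$ via the chain rule, combine the resulting sign conditions with $u\Delta u\ge 0$ to obtain $|u\Delta u|\le\theta|\nabla u|^2$, and then invoke Proposition~5.1 of~\cite{g}. In fact you are slightly more thorough, since you also spell out the $u^{-1}$ case (which the paper omits as analogous) and make explicit the positivity considerations needed to preserve the direction of the inequalities.
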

\begin{proof}
 The proof of the first assertion is based on the same type of computations and therefore, we will show only argument for the first of the two conditions. We have that, at points in $\Om$ where $u\not=0$, it holds that
\[
  0\geq \Delta \ln u={\rm div}\left(\frac{1}{u}\nabla u\right)=\frac{u\Delta u-|\nabla u|^2}{u^2},
 \]
 and so $|u\Delta u|\le |\nabla u|^2$ holds in $\Om$ (as, if $u=0$ at some point in $\Om$, then this inequality holds trivially). Thus, by the comment following definition of~\eqref{cond-main} in Introduction, condition~\eqref{cond-star} follows from Proposition 5.1 in~\cite{g}, even though $\theta=1$.  By analogy, the following direct calculations give us condition \eqref{cond-main} and show the second assertion:
\[
 0\geq \Delta u^{\alpha}={\rm div}\left(\frac{\alpha}{u^{1-\alpha}}\nabla u\right)=\frac{\alpha}{u^{2-\alpha}}(u\Delta u-(1-\alpha)|\nabla u|^2).
\]
\end{proof}

\section{Appendix: the proof of Proposition~\ref{prop24}}

The reasoning relies on the presentation in~\cite[Section 4]{g} and on a variant of the observation stated on page 261 in~\cite[Exc. 4]{ga}, see also \cite{str}. First, we state the following claim and show how it implies the assertion of Proposition~\ref{prop24}. Then, we prove the claim.

\smallskip
\noindent {\sc Claim.} \emph{Let $f:\Rn \to \R$ be a measurable function and let $c\in(0,\frac12)$ and $\lambda>0$. If for any cube $Q\subset \Rn$ there exists a constant $a_{Q}$ such that 
\[
 \left|\left\{x\in Q: |f(x)-a_{Q}|>\lambda \right\}\right|<c|Q|,
\]
then it holds that
\[
 \left|\left\{x\in Q: |f(x)-a_{Q}|>t \right\}\right|<e^{-c_2t}|Q|,
\]
where $t=3n\lambda$, while $c_2=(3\lambda)^{-1} \ln (4/3)$.
}
\smallskip

Suppose that the claim is proven. Then Lemma 4.2 (i) in~\cite{g} asserts that $A_{\alpha, 0, l}(f)<A_{\alpha', 0, l'} (f_{B_\ep})$, for some $\alpha'>\alpha$, $l'>l$, all $0<\ep<\ep_0(\alpha, L)$  and a nonnegative measurable function $f$. Here, $f_{B_\ep}$ stands for the mean value integral of $f$ over a hyperbolic ball $B_\ep$. Namely:
\[
 f_{B_\ep}:= f_{B_\ep(z,y)}=\vint_{B((z,y), \ep(y-\phi(z)))} f,\quad (z,y)\in \Om.
\]
Thus, Proposition~\ref{prop24} will be proven provided that we show that 
 \[
  \int_{Q} \left(A_{\alpha', 0, l'} |\nabla u|^2_{B_\ep}\right)^2(x)\,\ud x<c |Q| = c (l(Q))^n.
 \]
We find that
\begin{align*}
&\int_{Q} \left(A_{\alpha', 0, l'} |\nabla u|^2_{B_\ep}\right)^2(x)\,\ud x \\
&\,\,=\int_{Q} \left(A_{\alpha', 0, \infty} |\nabla u|^2_{B_\ep}\right)^2(x)-\left(A_{\alpha', l', \infty} |\nabla u|^2_{B_\ep}\right)^2(x)\,\ud x \\
&\,\,\leq \int_{Q} \left(A_{\alpha', 0, \infty} |\nabla u|^2_{B_\ep}\right)^2(x)-\left(A_{\alpha', l', \infty} |\nabla u|^2_{B_\ep}\right)^2(x_Q)\,\ud x+\int_{Q} \left|\left(A_{\alpha', l', \infty} |\nabla u|^2_{B_\ep}\right)^2(x_Q)-\left(A_{\alpha', l', \infty} |\nabla u|^2_{B_\ep}\right)^2(x)\right|\,\ud x. 
\end{align*}
The second integral on the right-hand side is bounded above by $c(\alpha,n, L)|Q|$ in a consequence of applying Lemma 4.3 in~\cite{g}, provided that we know that 
\[
\vint_{B_\ep(z,y)} |\nabla u|^2 \leq \frac{c}{(y-\phi(z))^2},\quad \hbox{for } (z,y)\in \Om.
\]
This condition immediately follows from the Caccioppoli inequality, see Lemma 4.5 in~\cite{g}, with the constant $c=c(n,\theta)\ep^{-2}\|u\|^2_{L^\infty}$. The first integral above we estimate by the Cavalieri formula as follows:
\[
 \int_{Q} \left(A_{\alpha', 0, \infty} |\nabla u|^2_{B_\ep}\right)^2(x)-\left(A_{\alpha', l', \infty} |\nabla u|^2_{B_\ep}\right)^2(x_Q)\,\ud x=\int_{0}^{\infty} \big|\big\{x \in Q: \big|\left(A_{\alpha', 0, \infty} |\nabla u|^2_{B_\ep}\right)^2(x)-a_Q\big| >t \big\}\big|\,\ud t,
\]
where $a_Q:=\left(A_{\alpha', l', \infty} |\nabla u|^2_{B_\ep}\right)^2(x_Q)$. By the reasoning analogous to the proof of  Theorem 4.7 in~\cite{g} and by Corollary 4.8 in~\cite{g}, we know that there exists $t_0$ such that for all $t>t_0$ it holds that
\[
\left|\left\{x\in Q: \big|\left(A_{\alpha', 0, \infty} |\nabla u|^2_{B_\ep}\right)^2(x)-a_Q\big|>t_0 \right\}\right|\leq \frac14|Q|,
\]
as by following the notation of~\cite{g}, see page 217, we may choose $t_0$ such that $\frac{C|Q|}{(t_0-C_1)^b}=\frac14$. By the claim applied to $f:=\left(A_{\alpha', 0, \infty} |\nabla u|^2_{B_\ep}\right)^2$, the latter estimate implies a corresponding one with $e^{-ct}|Q|$, which in turn gives us the assertion of Proposition~\ref{prop24}. 
\smallskip

\noindent It remains to show the above Claim. Without the loss of generality let $c=\frac14$ for the constant $c$ as in the assumptions of the claim. 

First, let $Q_j\subset Q$ denote any cube in the dyadic decomposition of cube $Q$ and set $G_1:=\bigcup Q_j$, the union of all maximal cubes $Q_j$ satisfying the following stopping time condition:
\[ 
|\{x\in Q_j: |f(x)-a_Q|>\lambda\}| \geq \frac13 |Q|.
\]
The family of cubes $G_1$ has the following properties:
\begin{itemize}
\item[(i)] $Q\not \in G_1$, as $c=\frac14$.
\item[(ii)] If $Q_j\in G_1$ and $\hat{Q_j}$ denotes a parent of $Q_j$, i.e. $\hat{Q_j}$ is the minimal cube containing $Q_j$, then as $\hat{Q_j} \not \in G_1$, we have that
\[
 \frac13 |Q_j| \leq |\{x\in Q_j: |f(x)-a_Q|>\lambda\}| \leq  |\{x\in \hat{Q_j}: |f(x)-a_Q|>\lambda\}|<\frac13 |\hat{Q_j}|=\frac23 |Q_j|.
\]
\item[(iii)] If $x\not \in G_1$, then $|f(x)-a_Q|\leq \lambda$ a.e. in $Q$. Indeed, if $x\in Q_k$ for a cube not satisfying the stopping condition, then for a set $E:=\{x\in Q: |f(x)-a_Q|>\lambda \}$, we have that
\[
 \vint_{E} 1_{E}=\frac{|E\cap Q_k|}{|Q_k|}<\frac13,\hbox{ and hence } 1_{E}(x)=0 \hbox{ and }x\not \in E.
\]
The Lebesgue Differentiation Theorem applied to $1_{E}$, a characteristic function of set $E$, gives the property (iii) to hold at a.e. point of $Q$.
\item[(iv)] $\sum_{Q_j \in G_1}|Q_j|\leq \frac34 |Q|$. Indeed, since by the stopping condition $|Q_j|\leq |Q| \leq 3|\{x\in Q_j: |f(x)-a_Q|>\lambda\}|$, we get
\[
\sum_{Q_j \in G_1}|Q_j| \leq \sum_{Q_j \in G_1}  3|\{x\in Q_j: |f(x)-a_Q|>\lambda\}| \leq 3|\{x\in Q: |f(x)-a_Q|>\lambda\}|<\frac34 |Q|.
\]
\end{itemize}
Next, we construct a family of cubes $G_2:=\bigcup Q_k$, consisting of maximal subcubes of cubes in $G_1$ satisfying the following stopping time condition:
\[ 
|\{x\in Q_k: |f(x)-a_{Q_j}|>\lambda\}| \geq \frac13 |Q_j|,\hbox{ for some } Q_j \in G_1.
\]
By property (iv) we get that
\begin{equation}\label{app-aux1}
 \sum_{Q_k \in G_2}|Q_k| \leq \sum_{Q_j \in G_1}\Big(\sum_{Q_k \subset Q_j, Q_k\in G_2}|Q_k|\Big)\leq \sum_{Q_j \in G_1}\frac34 |Q_j|\leq \bigg(\frac34\bigg)^2|Q|.
\end{equation}
Furthermore, for a.e. point $x\not\in G_2$ it holds that
\begin{equation}\label{app-aux2}
|f(x)-a_Q|\leq 3\lambda.
\end{equation}
In order to show~\eqref{app-aux2}, note that if $x\not\in G_2$, then we consider two cases: either (1) $x\not\in G_1$, or (2) $x\in Q_j$ for some $Q_j\in G_1$. In the first case, we have $|f(x)-a_{Q}|\leq \lambda$ by property (ii). In the second case, an argument similar to the one giving property (iii) shows that $|f(x)-a_{Q_j}|<\lambda$ for a.e $x\not \in G_2$.

Next, we show that $|a_Q-a_{Q_j}|<2\lambda$. Define sets
\[
E_1:=\{y\in Q_j: |f(y)-a_{Q}|>\lambda \},\quad E_2:=\{y\in Q_j: |f(y)-a_{Q_j}|>\lambda \}.
\]
Then, by property (ii) it holds that $|E_1|<\frac23 |Q_j|$ and, moreover, by the hypotheses of the claim (recall that we fixed $c=\frac14$) we have $|E_2|<\frac14 |Q_j|$. Furthermore, $(Q_j\setminus E_1)\cap (Q_j\setminus E_2) \not=\emptyset$, as otherwise
\[
 |Q_j|\geq |Q_j\setminus E_1|+|Q_j\setminus E_2|>(1-\frac23)|Q_j|+(1-\frac14)|Q_j|>|Q_j|.
\] 
Therefore, there exists $y\in Q_j$ such that $|f(y)-a_Q|\leq \lambda$ and $|f(y)-a_{Q_j}|\leq \lambda$. This immediately results in the desired estimate
\[
|a_Q-a_{Q_j}|\leq |f(y)-a_Q|+|f(y)-a_{Q_j}|\leq 2\lambda. 
\]
Hence, ~\eqref{app-aux2} follows, as
\[
|f(x)-a_Q|\leq |f(x)-a_{Q_j}|+|a_Q-a_{Q_j}|\leq 3\lambda.
\] 
We iterate the above stopping time procedure and after $n$ steps obtain the family of cubes $G_n$ with the following properties, cf. property (iv) and~\eqref{app-aux1}, \eqref{app-aux2} and :
\[
 (1) \sum_{Q_l\in G_n} |Q_l| \leq \Big(\frac34\Big)^n |Q|,\quad (2)\,\,\, |f(x)-a_Q|<3n\lambda\, \hbox{ for a.e. }x\not\in G_n.
\]
In a consequence, we get that $|\left\{x\in Q: |f(x)-a_Q|>3n\lambda \right\}| \leq \Big(\frac34\Big)^n |Q|$. The latter implies, upon setting $t:=3n\lambda$, the assertion of Claim, as $(3/4)^n=e^{-(\ln 4/3)(3\lambda)^{-1}t}$. This completes the proof of Claim and the proof of Proposition~\ref{prop24} is completed as well.

%

\end{document}